\newtheorem{assum}{A\hspace{-2pt}}
\newtheorem{theorem}{Theorem}
\crefname{theorem}{theorem}{Theorems}
\Crefname{theorem}{Theorem}{Theorems}
\newtheorem{lemma}{Lemma}
\crefname{lemma}{lemma}{lemmas}
\Crefname{lemma}{Lemma}{Lemmas}
\crefname{remark}{remark}{remarks}
\Crefname{remark}{Remark}{Remarks}
\crefname{corollary}{corollary}{corollaries}
\Crefname{corollary}{Corollary}{Corollaries}
\newtheorem{proposition}{Proposition}
\crefname{proposition}{proposition}{propositions}
\Crefname{proposition}{Proposition}{Propositions}
\crefname{definition}{definition}{definitions}
\Crefname{Definition}{Definition}{Definitions}
\crefname{example}{example}{examples}
\Crefname{Example}{Example}{Examples}
\crefname{figure}{figure}{figures}
\Crefname{Figure}{Figure}{Figures}
\crefname{table}{table}{tables}
\Crefname{Table}{Table}{Tables}
\crefname{assum}{A\hspace{-2pt}}{A\hspace{-2pt}}
\crefname{assumb}{B\hspace{-2pt}}{B\hspace{-2pt}}
\crefname{assumUGE}{UGE\hspace{-1pt}}{UGE\hspace{-1pt}}
\crefname{assumID}{IND\hspace{-1pt}}{IND\hspace{-1pt}}
\crefname{assumUE}{UE\hspace{-1pt}}{UE\hspace{-1pt}}
\crefname{assumSUP}{M\hspace{-1pt}}{M\hspace{-1pt}}
\newlist{renumerate}{enumerate}{3}
\setlist[renumerate]{wide, labelwidth=!, labelindent=0pt,label=(\roman*)}
\newlist{aenumerate}{enumerate}{3}
\setlist[aenumerate]{wide, labelwidth=!, labelindent=0pt,label=(\arabic*)}
\newlist{aaenumerate}{enumerate}{3}
\setlist[aaenumerate]{wide, labelwidth=!, labelindent=0pt,label=(\alph*)}
\newlist{aenumerateSpace}{enumerate}{3}
\setlist[aenumerateSpace]{wide, labelwidth=!,label=(\arabic*)}
\newlist{benumerate}{enumerate}{3}
\setlist[benumerate]{wide, labelwidth=!, labelindent=0pt,label=$\bullet$}
\newcommand{\PE}{\mathbb{E}}
\newcommand{\PP}{\mathbb{P}}
\newcommandx{\genericb}[1][1=]{b_{#1}}
\newcommandx{\Constros}[1][1=]{\operatorname{C}_{\operatorname{Ros},#1}}
\newcommandx{\Constburk}[1][1=]{\operatorname{C}_{\operatorname{Burk}}}
\newcommandx{\driftW}[1][1=]{W_{#1}}
\newcommandx{\metricd}[1][1=]{\mathsf{d}_{#1}}
\newcommandx\invmeasure[1][1=]{\Pi_{#1}}
\newcommandx{\PPjoint}[1][1=]{\PP^{\MKjoint[#1]}}
\newcommandx{\PEjoint}[1][1=]{\PE^{\MKjoint[#1]}}
\newcommandx{\PEMID}[1][1=\alpha]{\PE^{\MK[#1]}}
\newcommandx{\PPMID}[1][1=\alpha]{\PP^{\MK[#1]}}
\newcommandx{\MKjoint}[1][1=]{\bar{\operatorname{P}}_{#1}}
\newcommandx\costw[1][1=]{\mathsf{c}_{#1}}
\newcommandx\Intergrdist[1][1=]{\mathbb{M}_{1}(#1)}
\newcommandx{\mmarkov}[1][1=0]{m^{(\Markov)}_{#1}}
\def\F{\mathcal{F}}
\def\RemU{\bar{\mathcal{R}}}
\def\Zset{\mathsf{Z}}
\def\Zsigma{\mathcal{Z}}
\def\rset{\mathbb{R}}
\def\nset{\ensuremath{\mathbb{N}}}
\def\nsets{\ensuremath{\mathbb{N}^*}}
\newcommandx\sequence[4][2=,3=,4=]
\newcommandx\sequenceD[2][2=]
\newcommandx\sequenceDouble[4][3=,4=]
\newcommandx{\sequencen}[2][2=n\in\nset]{\ensuremath{\{ #1, \eqsp #2 \}}}
\newcommandx\sequencens[2][2=n]
\newcommandx\sequencet[4]
\def\PE{\mathbb{E}}
\newcommandx{\PVar}[1][1=]{\ensuremath{\operatorname{Var}_{#1}}}
\newcommandx{\MK}[1][1=\alpha]{\mathrm{P}_{#1}}
\newcommandx\MKK[1][1=\alpha]{\mathrm{K}_{#1}}
\def\MKQ{\mathrm{P}}
\def\MKR{\mathrm{R}}
\newcommandx{\PEtilde}[1][1=]{\PE^{\mathrm{K}_{#1}}}
\newcommandx{\PPtilde}[1][1=]{\PP^{\mathrm{K}_{#1}}}
\newcommandx{\norm}[2][2=]{\Vert#1 \Vert_{{#2}}}
\newcommandx{\normLigne}[2][2=]{\Vert#1 \Vert_{{#2}}}
\newcommandx{\normLine}[2][2=]{\Vert#1 \Vert_{{#2}}}
\newcommandx{\normop}[2][2=]{\Vert{#1}\Vert_{{#2}}}
\newcommandx{\normopLigne}[2][2=]{\Vert{#1}\Vert_{{#2}}}
\newcommandx{\normopLine}[2][2=]{\Vert{#1}\Vert_{{#2}}}
\newcommandx{\osc}[2][1=]{\mathrm{osc}_{#1}(#2)}
\newcommandx{\normlip}[2][2=\operatorname{Lip}]{\Vert#1 \Vert_{{#2}}}
\newcommand{\lip}{\operatorname{L}}
\newcommandx{\lipspace}[1]{\lip_{#1}}
\newcommandx{\CPP}[3][1=]
{\ifthenelse{\equal{#1}{}}{{\mathbb P}\left(\left. #2 \, \right| #3 \right)}{{\mathbb P}_{#1}\left(\left. #2 \, \right | #3 \right)}}
\newcommandx{\CPPtilde}[3][1=]
{\ifthenelse{\equal{#1}{}}{{\tilde{\mathbb P}}\left(\left. #2 \, \right| #3 \right)}{{\tilde{\mathbb P}}_{#1}\left(\left. #2 \, \right | #3 \right)}}
\newcommandx{\as}[1][1=\PP]{\ensuremath{#1\, -\mathrm{a.s.}}}
\newcommand{\eqsp}{\;}
\newcommandx{\boundmetric}[1][1=]{\kappa_{\MKK[#1]}}
\newcommandx{\Nnorm}[2][1=V]{[ #2]_{#1}}
\newcommandx{\lipnorm}[2][1=g]{[ #1]_{#2}}
\newcommandx{\CPE}[3][1=]{{\mathbb E}^{#3}_{#1}\left[#2\right]}
\newcommandx{\CPEext}[3][1=]{\tilde{\mathbb E}^{#3}_{#1}\left[#2\right]}
\newcommandx{\CPEtilde}[3][1=]{{\tilde{\mathbb E}}^{#3}_{#1}\left[#2\right]}
\newcommandx{\CPEs}[3][1=]{{\mathbb E}^{#3}_{#1}[#2]}
\def\trace{\operatorname{Tr}}
\newcommand{\rmd}{\mathrm{d}}
\def\funcAw{\mathbf{A}}
\def\funcbw{\mathbf{b}}
\newcommandx{\zmfuncA}[2][1=]{\tilde{\funcAw}^{#1}(#2)}
\newcommandx{\zmfuncAw}[1][1=]{\tilde{\funcAw}_{#1}}
\newcommandx{\zmfuncb}[2][1=]{\tilde{\funcbw}^{#1}(#2)}
\newcommandx{\funcct}[2][1=]{\funcctilde^{#1}(#2)}
\def\taumix{t_{\operatorname{mix}}}
\newcommand{\1}{\boldsymbol{1}}
\newcommandx{\CovC}[1][1=u]{\operatorname{C}_{#1}}
\DeclareMathAlphabet{\mathpzc}{OT1}{pzc}{m}{it}
\def\lyapW{\mathpzc{W}}
\newcommandx{\bias}[1][1=\alpha]{\operatorname{B}_{#1}}
\newcommandx\probaMarkovTilde[2][2=]
\def\funcctilde{\tilde{c}_u}
\def\MKR{\mathsf{R}}
\newcommandx{\driftb}[1][1=p]{\bar{b}_{#1}}
\def\tvdist{\mathsf{d}_{\operatorname{tv}}}
\newcommandx{\boldb}[1][1={q}]{\mathsf{b}_{#1}}
\newcommandx{\ConstGW}[1][1={n,\lyapW}]{\operatorname{G}_{#1}}
\newcommandx{\ConstMW}[1][1={n,\lyapW}]{\operatorname{M}_{#1}}
\Crefname{assumTD}{\textbf{TD}\hspace{-1pt}}{\textbf{TD}\hspace{-1pt}}
\crefname{assumTD}{\textbf{TD}}{\textbf{TD}}
\Crefname{assumptionC}{\textbf{C}\hspace{-1pt}}{\textbf{C}\hspace{-1pt}}
\crefname{assumptionC}{\textbf{C}}{\textbf{C}}
\Crefname{assumptionM}{\textbf{UGE}\hspace{-1pt}}{\textbf{UGE}\hspace{-1pt}}
\crefname{assumptionM}{\textbf{UGE}}{\textbf{UGE}}
\def\distance{\mathsf{d}}
\newcommandx{\vartconstwas}[1][1=V]{c_{#1}}
\newcommandx{\deltawas}[1][1=*]{\delta_{#1}}
\newcommandx{\wasser}[4][1=\distance,4=]{\mathbf{W}_{#1}^{#4}\left(#2,#3\right)}
\newcommandx{\covcoeff}[2]{\rho_{#1}^{(#2)}}
\newcommand{\dobrush}{\mathsf{\Delta}}
\newcommandx{\dobru}[3][1=,3=]{\dobrush_{#1}^{#3}( #2)}  
\def\Markov{\mathrm{M}}
\newcommandx{\dlim}[1]{\ensuremath{\stackrel{#1}{\Longrightarrow}}}
\def\OBM{\mathsf{OBM}}
\title{A note on concentration inequalities for the overlapped batch mean variance estimators for Markov chains}
\author{Eric Moulines~\footnote{Ecole Polytechnique, France, and MBUZAI, UAE, \texttt{eric.moulines@polytechnique.edu}.}, Alexey Naumov~\footnote{HSE University, Russia, \texttt{anaumov@hse.ru}.}, and Sergey Samsonov~\footnote{HSE University, Russia, \texttt{svsamsonov@hse.ru}.}}
\begin{document}

\maketitle

\begin{abstract}
In this paper, we study the concentration properties of quadratic forms associated with Markov chains using the martingale decomposition method introduced by Atchadé and Cattaneo (2014). In particular, we derive concentration inequalities for the overlapped batch mean (OBM) estimators of the asymptotic variance for uniformly geometrically ergodic Markov chains. Our main result provides an explicit control of the $p$-th moment of the difference between the OBM estimator and the asymptotic variance of the Markov chain with explicit dependence upon $p$ and mixing time of the underlying Markov chain.
\end{abstract}

\section{Introduction}
\label{sec: intro}
We consider a Markov kernel $\MKQ$ on a measurable space $(\Zset, \Zsigma)$, assuming that it admits a unique stationary distribution $\pi$. 
Let $(Z_k)_{k \in \nset}$ denote a Markov chain with kernel $\MKQ$ and initial distribution $\xi$. Without loss of generality, we assume $(Z_k)_{k \in \nset}$ to be a canonical chain defined on the respective canonical space. Given a measurable function $f: \Zset \to \rset$ satisfying $\pi(f) = 0$, we consider the quadratic form
\begin{equation}
\label{eq:u_stat_general_def}
U_n(f) = \sum_{\ell=1}^{n}\sum_{j=1}^{\ell} w(\ell,j) f(Z_{\ell}) f(Z_j),
\end{equation}
with the weight matrix $W_n = (w(\ell,j))_{\ell,j=1}^n$. Using the martingale decomposition method introduced by \cite{atchade2014martingale}, we derive in this paper explicit moment bounds for the quadratic form $U_n(f) - \PE[U_n(f)]$. In particular, our analysis provides the $p$-th moment bounds with explicit dependence on the parameters of the Markov kernel $\MKQ$, function $f$, weight matrix $W$ and the moment order $p$. We apply these bounds to study the concentration properties of the overlapped batch mean (OBM) estimator for asymptotic variance of Markov chains in \Cref{sec:variance}.

\paragraph{Problem setting.} We consider the setting of a Markov kernel $\MKQ$ which is uniformly geometrically ergodic. This assumption allows us to present the core elements of our methodology clearly and without excessive technical complexity. Precisely, we formulate the following assumption:

\begin{assum}
\label{assum:UGE}
The sequence $(Z_k)_{k \in \nset}$ is a Markov chain taking values in $(\Zset,\Zsigma)$ with the Markov kernel $\MKQ$. Moreover, the Markov kernel $\MKQ$ admits $\pi$ as a unique invariant distribution and there exists $\taumix \in \nset$, such that for any $k \in \nset$, it holds that 
\begin{equation}
\label{eq:tau_mix_contraction}
\sup_{z,z' \in \Zset} \tvdist(\MKQ^{k}(z,\cdot),\MKQ^{k}(z',\cdot)) \leq (1/4)^{\lceil k / \taumix \rceil}\eqsp.
\end{equation}
\end{assum}
Parameter $\taumix$ in \eqref{eq:tau_mix_contraction} is referred to as \emph{mixing time}, see e.g. \cite{paulin_concentration_spectral}. We define the function $g(z): \Zset \to \rset$ as a solution to the Poisson equation, associated to the function $f$, that is, 
\begin{equation}
\label{eq:Pois_eq_with_f}
g(z) - \MKQ g(z) = f(z)\eqsp.
\end{equation}
Under the UGE assumption \Cref{assum:UGE}, there exists a unique solution to the above equation \eqref{eq:Pois_eq_with_f} (see e.g. \cite[Chapter~21]{douc:moulines:priouret:soulier:2018}). Moreover, this solution is given by the formula 
\[
g(z) = \sum_{k=0}^{\infty}\{\MKQ^{k}f(z) - \pi(f)\} = \sum_{k=0}^{\infty} \MKQ^{k}f(z)\eqsp.
\]
For $\ell \in \{1,\ldots,n\}$ we define
\begin{equation}
\label{eq:martingale_increment_def}
M_{\ell} = g(Z_{\ell}) - \MKQ g(Z_{\ell-1})\eqsp, 
\end{equation}
and for $j \leq \ell$, we define 
\begin{equation}
\label{eq:delta_M_increment_defi}
\Delta M_{\ell,j} = (g(Z_j) - \MKQ g(Z_{j-1}))(g(Z_{\ell}) - \MKQ g(Z_{\ell-1}))\eqsp.
\end{equation}
Set $\F_{\ell} = \sigma(Z_{j}, j \leq \ell)$. Since $\CPE{ M_{\ell}}{\F_{\ell-1}} = 0$, $ M_{\ell}$ is a martingale increment sequence. Moreover, for $\ell > j$, it holds that $\CPE{\Delta M_{\ell,j}}{\F_{\ell-1}} = 0$, and $\Delta M_{\ell,j}$ is for all $\ell > j$ a martingale increment sequence. Note also that 
\[
\Delta M_{\ell,\ell} = (g(Z_{\ell}) - \MKQ g(Z_{\ell-1}))^2\eqsp.
\]
Following \cite{atchade2014martingale}, we obtain the following result:
\begin{proposition}[Lemma~2.2 in \cite{atchade2014martingale}]
\label{prop:quadr_form_decomposition}
Assume \Cref{assum:UGE}. Then for any bounded function $f: \Zset \to \rset$, it holds that 
\begin{align}
\label{eq:key_representation_U_stat}
U_n(f) = \sum_{\ell=1}^{n} w(\ell,\ell) \Delta M_{\ell,\ell} + \sum_{\ell=1}^{n}\sum_{j=1}^{\ell-1}w(\ell,j) \Delta M_{\ell,j}  + \RemU_{n}\eqsp,
\end{align}
where we have defined
\begin{equation}
\label{eq:reminder_u_stat}
\begin{split}
\RemU_{n} 
&= \underbrace{\sum_{\ell=1}^{n}\sum_{j=1}^{\ell} w(\ell,j) g(Z_{j}) \left(\MKQ g(Z_{\ell-1}) - \MKQ g(Z_{\ell})\right)}_{T_1} \\
&+ \underbrace{\sum_{\ell=1}^{n}\sum_{j=1}^{\ell} w(\ell,j) g(Z_{\ell}) \left(\MKQ g(Z_{j-1}) - \MKQ g(Z_{j})\right)}_{T_2} \\
&+ \underbrace{\sum_{\ell=1}^{n}\sum_{j=1}^{\ell} w(\ell,j) \left( \MKQ g(Z_{j}) \MKQ g(Z_{\ell}) - \MKQ g(Z_{j-1}) \MKQ g(Z_{\ell - 1}) \right)}_{T_3}\eqsp.
\end{split}
\end{equation}
\end{proposition}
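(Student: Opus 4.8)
The plan is to establish \eqref{eq:key_representation_U_stat} as an exact algebraic identity: substitute the Poisson equation \eqref{eq:Pois_eq_with_f} into the definition \eqref{eq:u_stat_general_def} of $U_n(f)$ and reorganize the resulting terms, keeping the martingale part separate from everything else. As a preliminary remark I would note that, under \Cref{assum:UGE}, the series $g(z) = \sum_{k\ge 0}\MKQ^k f(z)$ converges uniformly thanks to \eqref{eq:tau_mix_contraction} (each summand is bounded by $2\supnorm{f}(1/4)^{\lceil k/\taumix\rceil}$), so $g$ is bounded and hence $M_\ell$, $\Delta M_{\ell,j}$ and each of $T_1,T_2,T_3$ only involve bounded quantities; since every sum appearing in the argument is finite, the rearrangements below are unproblematic.

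The key observation is that \eqref{eq:Pois_eq_with_f} can be rewritten, for every index $k$, as
\[
f(Z_k) \;=\; \bigl(g(Z_k) - \MKQ g(Z_{k-1})\bigr) + \bigl(\MKQ g(Z_{k-1}) - \MKQ g(Z_k)\bigr) \;=\; M_k + \bigl(\MKQ g(Z_{k-1}) - \MKQ g(Z_k)\bigr)\eqsp.
\]
I would plug this decomposition into the product $f(Z_\ell)f(Z_j)$ in \eqref{eq:u_stat_general_def} and expand, obtaining four families of terms: $M_\ell M_j$; $M_\ell\bigl(\MKQ g(Z_{j-1}) - \MKQ g(Z_j)\bigr)$; $\bigl(\MKQ g(Z_{\ell-1}) - \MKQ g(Z_\ell)\bigr)M_j$; and $\bigl(\MKQ g(Z_{\ell-1}) - \MKQ g(Z_\ell)\bigr)\bigl(\MKQ g(Z_{j-1}) - \MKQ g(Z_j)\bigr)$. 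Since $M_\ell M_j = \Delta M_{\ell,j}$, the first family contributes $\sum_{\ell=1}^n\sum_{j=1}^\ell w(\ell,j)\Delta M_{\ell,j}$, which is exactly the sum of the first two terms on the \rhs\ of \eqref{eq:key_representation_U_stat} once the diagonal contribution $j=\ell$ is isolated (recall $\Delta M_{\ell,\ell} = (g(Z_\ell)-\MKQ g(Z_{\ell-1}))^2$).

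It then remains to verify that the other three families add up to $\RemU_n$. Writing $M_k = g(Z_k) - \MKQ g(Z_{k-1})$ inside them and collecting the four products, one checks — by an elementary regrouping that telescopes the pieces containing $\MKQ g(Z_{k-1})$ — that, for fixed $\ell$ and $j$, the sum of those three families equals
\[
g(Z_j)\bigl(\MKQ g(Z_{\ell-1}) - \MKQ g(Z_\ell)\bigr) + g(Z_\ell)\bigl(\MKQ g(Z_{j-1}) - \MKQ g(Z_j)\bigr) + \bigl(\MKQ g(Z_j)\MKQ g(Z_\ell) - \MKQ g(Z_{j-1})\MKQ g(Z_{\ell-1})\bigr)\eqsp;
\]
multiplying by $w(\ell,j)$ and summing over $1\le j\le \ell\le n$ yields precisely $T_1+T_2+T_3$. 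Finally, I would record the martingale-increment properties asserted around \eqref{eq:martingale_increment_def}--\eqref{eq:delta_M_increment_defi}: by the Markov property $\CPE{g(Z_\ell)}{\F_{\ell-1}} = \MKQ g(Z_{\ell-1})$, hence $\CPE{M_\ell}{\F_{\ell-1}} = 0$; and for $\ell > j$ the factor $M_j$ is $\F_{\ell-1}$-measurable, so $\CPE{\Delta M_{\ell,j}}{\F_{\ell-1}} = M_j\,\CPE{M_\ell}{\F_{\ell-1}} = 0$. There is no genuine difficulty in this argument beyond careful bookkeeping; the one place that warrants writing out in full is the regrouping in the last step, which is a purely algebraic identity among $g(Z_j), g(Z_\ell)$ and the four quantities $\MKQ g(Z_{j-1}),\MKQ g(Z_j),\MKQ g(Z_{\ell-1}),\MKQ g(Z_\ell)$.
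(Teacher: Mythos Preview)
Your proposal is correct and follows essentially the same route as the paper: both use the Poisson-equation identity $f(Z_k) = M_k + \bigl(\MKQ g(Z_{k-1}) - \MKQ g(Z_k)\bigr)$ and expand the product $f(Z_\ell)f(Z_j)$, isolating the martingale piece $\Delta M_{\ell,j}$ and regrouping the remaining three cross-terms into $T_1+T_2+T_3$. The only difference is presentational---the paper packages the same identity in the $\Lambda_2$ notation of \cite{atchade2014martingale} and then invokes their Lemma~2.2, whereas you carry out the expansion directly; your self-contained verification of the algebraic regrouping is exactly what that lemma encodes.
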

\begin{proof}
Proof is provided in \Cref{sec:prrof_quadr_form_decomposition}.
\end{proof}

We first provide below an alternative representation for $\RemU_{n}$:
\begin{lemma}
\label{lem:bar_r_n_representation}
The remainder term $\RemU_{n}$ defined in \eqref{eq:reminder_u_stat} can be represented as  
\begin{equation}
\label{eq:bar_r_n_representation}
\begin{split}
\RemU_{n} &= \sum_{\ell=3}^{n} \MKQ g(Z_{\ell-1}) \sum_{j=1}^{\ell-2} \Delta^{(1,0)}(\ell,j) M_{j}  + \sum_{\ell=1}^{n} M_{\ell} \sum_{j=1}^{\ell-1} \Delta^{(0,1)}(\ell,j) \MKQ g(Z_{j-1}) \\
&+ \sum_{\ell=3}^{n-1} \sum_{j=1}^{\ell-2} \Delta^{(1,1)}(\ell,j) \MKQ g(Z_{\ell-1}) \MKQ g(Z_{j-1}) + \RemU_{n,mart} +  \RemU_{n,rem} \eqsp,
\end{split}
\end{equation}
where the coefficients $\Delta^{(1)}(\ell,j)$, $\Delta^{(2)}(\ell,j)$, and $\Delta^{(1,1)}(\ell,j)$ has form 
\begin{equation}
\label{eq:coef_def}
\begin{split}
\Delta^{(1,0)}(\ell,j) &= w(\ell,j) - w(\ell-1,j)\eqsp, \quad \Delta^{(0,1)}(\ell,j) = w(\ell,j) - w(\ell,j-1)\eqsp, \\
\Delta^{(1,1)}(\ell,j) &= w(\ell,j) - w(\ell,j-1) - w(\ell-1,j) + w(\ell-1,j-1)\eqsp, \\
\Delta^{(m)}(\ell,\ell) &= w(\ell,\ell-1) + w(\ell-1,\ell-2) - w(\ell,\ell-2) - 2w(\ell-1,\ell-1)\eqsp,
\end{split}
\end{equation}
and the remainder terms $\RemU_{n,mart}$, $\RemU_{n,rem}$, are defined as 
\begin{equation}
\label{eq:rem_u_mart_def}
\begin{split}
\RemU_{n,mart} &= \underbrace{\sum_{\ell=2}^{n} \Delta^{(1,0)}(\ell,\ell-1) \MKQ g(Z_{\ell-1}) M_{\ell-1} - \sum_{\ell=1}^{n} w(\ell,\ell) M_{\ell} \MKQ g(Z_{\ell})}_{S_1} \\
& \qquad+ \underbrace{\sum_{\ell=1}^{n} \bigl(\Delta^{(0,1)}(\ell,\ell) + w(\ell,\ell)\bigr) M_{\ell} \MKQ g(Z_{\ell-1})}_{S_2} \eqsp,
\end{split}
\end{equation}
and 
\begin{equation}
\label{eq:rem_u_n_def}
\begin{split}
\RemU_{n,rem} 
&= - \MKQ g(Z_n) \sum_{j=1}^{n}w(n,j) M_{j} + \sum_{\ell=2}^{n-1}\bigl( w(\ell,\ell) + w(\ell-1,\ell-1) - w(\ell,\ell-1) \bigr) \{\MKQ g(Z_{\ell-1})\}^2 \\
& + w(1,1) \{\MKQ g(Z_{0})\}^2 + (w(n,n)+w(n-1,n-1)) \{\MKQ g(Z_{n-1})\}^2 \\
& + \sum_{\ell=2}^{n}\Delta^{(m)}(\ell,\ell) \MKQ g(Z_{\ell-1}) \MKQ g(Z_{\ell-2}) + w(n,n-2) \MKQ g(Z_{n-1}) \MKQ g(Z_{n-2}) \\
& - \MKQ g(Z_{n}) \sum_{j=1}^{n} \Delta^{(0,1)}(n,j) \MKQ g(Z_{j-1}) + w(n,n) \{\MKQ g(Z_{n})\}^2 - \sum_{j=n-2}^{n} w(n,j) \MKQ g(Z_{n-1}) \MKQ g(Z_{j})\eqsp.
\end{split}
\end{equation}
\end{lemma}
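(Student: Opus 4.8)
The plan is to take each of the three sums $T_1$, $T_2$, $T_3$ appearing in \eqref{eq:reminder_u_stat} and rewrite it by a discrete Abel (summation-by-parts) argument, replacing the factors $g(Z_j)$, $g(Z_\ell)$ by the martingale increments $M_j$, $M_\ell$ via the defining relation \eqref{eq:martingale_increment_def}, namely $g(Z_j) = M_j + \MKQ g(Z_{j-1})$, and then reindexing the double sums so that the weight differences $\Delta^{(1,0)}$, $\Delta^{(0,1)}$, $\Delta^{(1,1)}$ defined in \eqref{eq:coef_def} emerge as the coefficients. Concretely, in $T_3$ the summand is a difference $\MKQ g(Z_j)\MKQ g(Z_\ell) - \MKQ g(Z_{j-1})\MKQ g(Z_{\ell-1})$, which telescopes along the diagonal direction; shifting the index $\ell \mapsto \ell+1$ (and $j\mapsto j+1$) in one half of the terms and collecting produces the coefficient $\Delta^{(1,1)}(\ell,j)$ for the bulk term $\MKQ g(Z_{\ell-1})\MKQ g(Z_{j-1})$, together with boundary contributions at $\ell \in \{1,2,n-1,n\}$ and $j\in\{n-2,n-1,n\}$ which are exactly the terms collected in $\RemU_{n,rem}$. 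For $T_1$ and $T_2$, the factor $\MKQ g(Z_{\ell-1}) - \MKQ g(Z_\ell)$ (resp. $\MKQ g(Z_{j-1}) - \MKQ g(Z_j)$) is again a first difference; after substituting $g(Z_j) = M_j + \MKQ g(Z_{j-1})$ and shifting indices, the $M_j$-part yields $\sum_{\ell}\MKQ g(Z_{\ell-1})\sum_{j}\Delta^{(1,0)}(\ell,j)M_j$ plus diagonal/near-diagonal leftovers ($S_1$, $S_2$ in \eqref{eq:rem_u_mart_def}), while the $\MKQ g(Z_{j-1})$-part yields $\sum_\ell M_\ell \sum_j \Delta^{(0,1)}(\ell,j)\MKQ g(Z_{j-1})$ and further cross terms $\MKQ g(Z_{\ell-1})\MKQ g(Z_{j-1})$ that merge with the $\Delta^{(1,1)}$ sum coming from $T_3$.

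The order I would carry this out is: (i) expand $g(Z_\cdot) = M_\cdot + \MKQ g(Z_{\cdot-1})$ everywhere in $T_1, T_2, T_3$, splitting each into a ``martingale part'' and a ``$\MKQ g$-part''; (ii) in each resulting double sum, perform summation by parts in the appropriate index ($\ell$ for the first-difference of $\MKQ g(Z_\ell)$ in $T_1$, $j$ in $T_2$, both in $T_3$), carefully writing out the boundary terms at the extremes of the summation ranges; (iii) reindex so that every bulk summand carries one of the coefficients in \eqref{eq:coef_def}; (iv) sort all remaining terms: the ones that are products $M_\ell M_j$ or $(M_\ell)^2$ with $\ell$ near $j$ go into $\RemU_{n,mart}$, and the ones that are purely $\MKQ g \cdot \MKQ g$ or involve $\MKQ g(Z_n)$, $\MKQ g(Z_0)$ at the boundary go into $\RemU_{n,rem}$; (v) verify the three displayed bulk sums have the stated index ranges ($\ell$ from $3$ to $n$, $j$ from $1$ to $\ell-2$, etc., which reflects that $\Delta^{(1,0)}(\ell,j)$ and $\Delta^{(1,1)}(\ell,j)$ vanish or are relabelled on the boundary of the original triangle $j\le\ell$).

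The main obstacle is purely bookkeeping: keeping track of exactly which boundary terms arise from each summation by parts and from each index shift, and checking that they coalesce into precisely the expressions \eqref{eq:rem_u_mart_def} and \eqref{eq:rem_u_n_def} without any leftover or double-counted term. In particular one must be careful that the weight matrix $w(\ell,j)$ is only defined for $1\le j\le \ell\le n$, so every shift $\ell\mapsto\ell-1$ or $j\mapsto j-1$ that pushes an index outside the triangle has to be handled as an explicit boundary correction rather than silently absorbed; this is where the asymmetric-looking boundary sums over $\ell\in\{2,\dots,n\}$, the isolated $w(1,1)\{\MKQ g(Z_0)\}^2$ and $w(n,n-2)\MKQ g(Z_{n-1})\MKQ g(Z_{n-2})$ terms, and the $\Delta^{(m)}(\ell,\ell)$ diagonal coefficient come from. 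Once the algebra is organized by ``which term of $T_1,T_2,T_3$ it descends from,'' matching the two sides of \eqref{eq:bar_r_n_representation} is a finite, if tedious, verification; no analytic input beyond \Cref{assum:UGE} (which guarantees $g$ and hence $\MKQ g$ are well-defined and bounded, so all the sums converge absolutely and the rearrangements are legitimate) is needed.
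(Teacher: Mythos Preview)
Your proposal is correct and follows essentially the same route as the paper: substitute $g(Z_\cdot)=M_\cdot+\MKQ g(Z_{\cdot-1})$ in $T_1,T_2$, shift indices to produce the first-difference weights $\Delta^{(1,0)},\Delta^{(0,1)}$, and then combine the leftover $\MKQ g\cdot\MKQ g$ pieces from $T_1,T_2$ with $T_3$ into a single quadratic form in $t_\ell=\MKQ g(Z_\ell)$ whose reindexing yields $\Delta^{(1,1)}$ plus the boundary terms of $\RemU_{n,rem}$. One small correction to your sorting in step (iv): the terms collected in $\RemU_{n,mart}$ are near-diagonal products of the form $M_\ell\cdot\MKQ g(Z_{\ell})$ or $M_\ell\cdot\MKQ g(Z_{\ell-1})$, not $M_\ell M_j$---all genuine $M_\ell M_j$ contributions were already extracted in \eqref{eq:key_representation_U_stat} before $\RemU_n$ was defined.
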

\begin{proof}
The proof is provided in \Cref{sec:proof_remainder_representation}.
\end{proof}

\section{Applications to the concentration properties of the OBM estimator}
\label{sec:variance}
In this section, we study the concentration properties of the overlapped batch mean (OBM) estimator \cite{meketon1984overlapping, flegal:2010} of the asymptotic variance of the Markov chain. Assume that the Markov kernel $\MKQ$ satisfies the UGE assumption \Cref{assum:UGE}. In this case, for any bounded function $f: \Zset \to \rset$, it is known (see e.g. \cite[Chapter~21]{douc:moulines:priouret:soulier:2018}), that the central limit theorem holds:
\[
\frac{1}{\sqrt{n}}\sum_{k=0}^{n-1}\{f(Z_k) - \pi(f)\} \to \mathcal{N}(0,\sigma_{\infty}^2(f))\eqsp,
\]
where $\sigma_{\infty}^2(f)$ is the \emph{asymptotic variance} defined as 
\begin{equation}
\label{V_infinity_sum} 
\sigma_{\infty}^2(f) = \rho^{(f)}(0) + 2\sum_{\ell=1}^{\infty}\rho^{(f)}(\ell)\eqsp, \text{ where } \rho^{(f)}(\ell) = \PE_{\pi}[\{f(Z_{0}) - \pi(f)\}\{f(Z_{\ell}) - \pi(f)\}] 
\end{equation}
is the $l$-th order autocovariance under stationary. The overlapped batch mean (OBM) estimator \cite{meketon1984overlapping} is defined as
\begin{equation}
\label{eq:OBM_estimator_def}
\hat \sigma_{\OBM}^2(f) = \frac{b_n}{n-b_n+1} \sum_{t = 0}^{n-b_n} ( \pi_{b_n,t}(f) - \pi_{n}(f))^2\eqsp,
\end{equation}
where we have set
\begin{equation}
\pi_{b_n,t}(f) = \frac{1}{b_n}\sum_{\ell=1}^{b_n} f(Z_{t+\ell})\eqsp, \quad t \in \{0,\ldots,n-b_n\}\eqsp, \quad \pi_n(f) = \frac{1}{n} \sum_{\ell=1}^{n} f(Z_\ell)\eqsp.
\end{equation}
It is possible to study the properties of the OBM estimator by reducing it to the spectral estimator with a particular choice of Bartlett weight kernel \cite{welch1987relationship}, \cite{damerdji1991strong}. At the same time, this approach requires to estimate the concentration properties of a number of remainder terms, which we can avoid when proceeding directly with the quadratic form, induced by the OBM estimator. From now on we assume, without loss of generality, that $\pi(f) = 0$. Define the matrix $B = \{B_{ij}\} \in \rset^{n - b_n+1 \times n}$
$$
B_{ij} = \begin{cases}
\frac{1}{b_n},  & i \le j \le i + b_n - 1, \\
0, & \text{ otherwise} \eqsp.
\end{cases}
$$
Let $X = f(Z) = \{f(Z_\ell)\}_{\ell = 1}^{n} \in \rset^{n}$. With these notations, we obtain, using the elementary transformations, that the OBM estimator can be rewritten as 
\begin{align}
\hat \sigma_{\OBM}^2(f) =  \frac{b_n}{n-b_n+1} \left( X^\top B^\top B X - \frac{2}{n} \mathbf{1}_{n-b_n+1}^\top B X \cdot \mathbf{1}_n^\top X + (n-b_n+1) \left( \frac{1}{n} \mathbf{1}_n^\top X \right)^2 \right)\eqsp,
\end{align}
where $\mathbf{1}_k \in \rset^{k \times 1}$ is a column vector of all ones, $k \in \nset$. We define now the column vectors $u, v \in \rset^{n \times 1}$ of the form: 
\begin{equation}
\label{eq:vectors_u_v_def}
u = \frac{1}{n} \1_{n} - \frac{1}{n-b_n+1} B^{\top} \1_{n-b_n+1}\eqsp, \quad 
v = \frac{1}{n-b_n+1} B^{\top} \1_{n-b_n+1}\eqsp.
\end{equation}
With the above notation, we can represent $\hat \sigma_{\OBM}^2(f)$ from \eqref{eq:OBM_estimator_def} as 
\begin{equation}
\label{eq:OBM_equivalent}
\hat \sigma_{\OBM}^2(f) =  \frac{b_n}{n-b_n+1} X^\top B^\top B X + b_n (u^\top X )^2 -  b_n (v^\top X)^2 \eqsp, 
\end{equation}
Next we focus on the quadratic form
$$
V_n(f) = \frac{b_n}{n-b_n+1} X^\top B^\top B X =\sum_{\ell = 1}^{n} \sum_{j = 1}^{\ell} w(\ell, j) f(Z_\ell) f(Z_j),
$$
where $w(\ell, \ell)= \frac{b_n}{n-b_n+1}\{B^\top B\}_{\ell, \ell}$ and $w(\ell, j) = \frac{2b_n}{n-b_n+1}\{B^\top B\}_{\ell, j}$. Note that
$$
w(\ell, k) = \frac{b_n}{n-b_n+1} \sum_{j=1}^{n-b_n+1} B_{j \ell} B_{j k}. 
$$
The diagonal weights in the above formula have the form:
\begin{equation}
\label{eq:w_ell_ell_def}
w(\ell, \ell) =
\begin{cases}
\frac{\ell}{b_n(n - b_n + 1)}, & \text{if } 1 \leq \ell \leq b_n - 1, \\
\frac{1}{n - b_n + 1}, & \text{if } b_n \leq \ell \leq n - b_n + 1, \\
\frac{n - \ell + 1}{b_n(n - b_n + 1)}, & \text{if } n - b_n + 2 \leq \ell \leq n \eqsp.
\end{cases}
\end{equation}
Hence, with direct summation, $\trace(W) = \sum_{\ell=1}^{n}w(\ell,\ell) = 1$.
Similarly, the OBM weights \( w(\ell, j) \), for \( \ell > j \), are given by the following formulas:
\begin{equation}
\label{eq:w_ell_j_def}
w(\ell, j) =
\begin{cases}
\frac{2 j}{b_n (n - b_n + 1)}, 
& \text{if } \ell < b_n,\ 1 \leq j \leq \ell - 1, \\
\frac{2}{b_n (n - b_n + 1)} (b_n - (\ell - j)), 
& \text{if } b_n \leq \ell \le n - b_n + 1,\ \ell - b_n + 1 \leq j \leq \ell - 1, \\
\frac{2}{b_n (n - b_n + 1)} \left( \min(j, n - b_n + 1) - \ell + b_n \right), 
& \text{if } \ell > n - b_n + 1,\ \ell - b_n + 1 \le j \le \ell - 1, \\
0, & \text{otherwise}.
\end{cases}
\end{equation}
The above formulas imply the following upper bounds on the coefficients $w(\ell,j)$ and the quantities introduced in \eqref{eq:coef_def}: 

\begin{lemma}
\label{lem:coef_properties_start}
Let $w(\ell,j)$ be coefficients introduced in \eqref{eq:w_ell_j_def} and \eqref{eq:w_ell_ell_def}. Then it holds that:
\begin{enumerate}
    \item The coefficients $w(\ell,j)$ satisfy 
    \[
    w(\ell,j) \leq 
    \begin{cases}
    \frac{2}{n - b_n+1}, &|\ell - j|\leq b_n, \\
    0, & \text{ otherwise}
    \end{cases}
    \]
    \item Coefficients $\Delta^{(1,0)}(\ell,j)$ and $\Delta^{(0,1)}(\ell,j)$ satisfy, for any $j \leq \ell-1$, the bound
    \begin{equation}
    \label{eq:difference_bound_1_0_0_1}
    \begin{split}
    |\Delta^{(1,0)}(\ell, j)| &\leq \frac{2}{b_n (n-b_n+1)}, \\
    |\Delta^{(0,1)}(\ell, j)| &\leq \frac{2}{b_n (n-b_n+1)}.
    \end{split}
    \end{equation}
    \item Among the coefficients $\Delta^{(1,1)}(\ell,j)$ only 
    \[
    \Delta^{(1,1)}(\ell,\ell-b_n) \neq 0\eqsp, \quad \ell \geq b_n + 1\eqsp.
    \]
    Moreover, it holds that 
    \[
    |\Delta^{(1,1)}(\ell,\ell-b_n)| = \frac{2}{b_n (n-b_n+1)}\eqsp. 
    \]
\end{enumerate}
\end{lemma}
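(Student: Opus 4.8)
The plan is to obtain all three estimates from the explicit combinatorial description of the weights. Writing $B_{j\ell} = b_n^{-1}\indin{j\in\msi_\ell}$ with $\msi_\ell := \{j\in\nset : 1\le j\le n-b_n+1,\ \ell-b_n+1\le j\le\ell\}$ --- an interval of at most $b_n$ consecutive integers --- one has $\{B^\top B\}_{\ell,k} = b_n^{-2}|\msi_\ell\cap\msi_k|$, hence $w(\ell,k) = \tfrac{2}{b_n(n-b_n+1)}|\msi_\ell\cap\msi_k|$ for $\ell\neq k$ and $w(\ell,\ell) = \tfrac{1}{b_n(n-b_n+1)}|\msi_\ell|$. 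This is consistent with the piecewise formulas \eqref{eq:w_ell_ell_def}--\eqref{eq:w_ell_j_def}, and I would move freely between the two descriptions depending on which is more convenient.

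Items~1 and~2 then follow quickly. Since $|\msi_\ell\cap\msi_k|\le\min(|\msi_\ell|,|\msi_k|)\le b_n$ and $\msi_\ell\cap\msi_k=\emptyset$ once $|\ell-k|\ge b_n$, one gets $w(\ell,k)\le\tfrac{2}{n-b_n+1}$ for $|\ell-k|\le b_n$ and $w(\ell,k)=0$ otherwise. For the difference coefficients with $j\le\ell-2$ (so that both involved weights are off-diagonal and carry the same prefactor $2$), $\Delta^{(1,0)}(\ell,j)=\tfrac{2}{b_n(n-b_n+1)}\bigl(|\msi_\ell\cap\msi_j|-|\msi_{\ell-1}\cap\msi_j|\bigr)$; since each endpoint of $\msi_\ell$ moves by at most $1$ under $\ell\mapsto\ell-1$, the two overlap counts differ by at most $1$, giving $|\Delta^{(1,0)}(\ell,j)|\le\tfrac{2}{b_n(n-b_n+1)}$, and the bound for $\Delta^{(0,1)}$ is obtained identically after exchanging the role of row and column index. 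The few diagonal-adjacent indices are checked directly from \eqref{eq:w_ell_ell_def}--\eqref{eq:w_ell_j_def}.

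Item~3 is the one that needs real care, and it is the main (if elementary) obstacle. Here I would evaluate the mixed second difference $\Delta^{(1,1)}(\ell,j) = w(\ell,j)-w(\ell,j-1)-w(\ell-1,j)+w(\ell-1,j-1)$ regime by regime. The structural point is that, on the band $1\le\ell-j\le b_n-1$, each branch of $w$ is affine either in $\ell-j$ (the bulk branch $w=\tfrac{2}{b_n(n-b_n+1)}(b_n-(\ell-j))$) or in a single index (the edge branches $\propto j$, resp.\ $\propto n+1-\ell$), so its mixed second difference vanishes there; off the band all four terms are $0$. The only surviving contribution is at the band boundary: when $\ell-j=b_n$ we have $w(\ell,j)=w(\ell,j-1)=w(\ell-1,j-1)=0$ while $w(\ell-1,j)=\tfrac{2}{b_n(n-b_n+1)}$ (since $(\ell-1)-j=b_n-1$), hence $|\Delta^{(1,1)}(\ell,\ell-b_n)|=\tfrac{2}{b_n(n-b_n+1)}$ for $\ell\ge b_n+1$ and $\Delta^{(1,1)}(\ell,j)=0$ for every other admissible $j$. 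The bookkeeping across the three regimes of \eqref{eq:w_ell_ell_def}--\eqref{eq:w_ell_j_def}, plus verifying that the regime transitions at $\ell\in\{b_n,n-b_n+1\}$ and $j\in\{1,n-b_n+1\}$ and the boundary conventions (e.g.\ $w(\ell,0)=0$) do not create spurious nonzero values, is where slips are most likely and hence the part deserving the most attention.
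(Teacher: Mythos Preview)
Your proposal is correct and in the same spirit as the paper's own proof, which consists of a single sentence deferring to ``simple algebra and the definition of coefficients $w(\ell,j)$''. Your interval--overlap description $w(\ell,k)\propto|\msi_\ell\cap\msi_k|$ is a cleaner way to see why items~1 and~2 hold than a bare case check, and your affine/second-difference argument for item~3 is exactly the kind of regime-by-regime verification the paper has in mind; there is no substantive difference in approach.
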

\begin{proof}
Te proof follows from simple algebra and the definition of coefficients $w(\ell,j)$ in \eqref{eq:w_ell_j_def} and coefficients from \eqref{eq:coef_def}. 
\end{proof}

\begin{theorem}
\label{OBM concentration lemma}
    Assume \Cref{assum:UGE} and let $f$ be a bounded function with $\pi(f) = 0$ and $\norm{f}[\infty] \leq 1$. Then for any $p \geq 2$, and $n \geq 2b_n + 1$, it holds that 
    $$
    \PE_{\xi}^{1/p}[|\hat \sigma_{\OBM}^2(f) - \sigma_{\infty}^2(f)|^p] \lesssim \frac{p \taumix^3}{\sqrt{n}} + \frac{p^2 \taumix^2 \sqrt{b_n}}{\sqrt{n}} + \frac{p^2 \taumix^{2}}{\sqrt{b_n}}
    $$
\end{theorem}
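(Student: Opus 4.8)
The strategy is to analyze the three terms in the decomposition $\hat\sigma_{\OBM}^2(f) = V_n(f) + b_n(u^\top X)^2 - b_n(v^\top X)^2$ separately, leveraging the quadratic-form machinery of \Cref{prop:quadr_form_decomposition} and \Cref{lem:bar_r_n_representation} for the main term $V_n(f)$, and direct (simpler) estimates for the two rank-one corrections. First I would record the deterministic target: show that $\PE_\xi[V_n(f)]$ approximates $\sigma_\infty^2(f)$ up to an error of order $\taumix^2 \sqrt{b_n}/\sqrt n + \taumix^2/\sqrt{b_n}$ (the usual bias of a batch-means-type estimator: a boundary term of size $b_n/n$ times $\sigma_\infty^2$, plus the truncation error $\sum_{|\ell| > b_n}\rho^{(f)}(\ell)$ which decays geometrically in $b_n/\taumix$). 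Here I would use that $\|g\|_\infty \lesssim \taumix$ under \Cref{assum:UGE}, hence $\|\MKQ g\|_\infty \lesssim \taumix$ and $\sigma_\infty^2(f) \lesssim \taumix$.

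**Fluctuation of $V_n(f)$.** For the stochastic part I would apply \Cref{prop:quadr_form_decomposition} to $w(\ell,j)$, writing $V_n(f) - \PE_\xi[V_n(f)]$ as (i) the diagonal martingale sum $\sum_\ell w(\ell,\ell)(\Delta M_{\ell,\ell} - \PE_\xi \Delta M_{\ell,\ell})$, (ii) the off-diagonal double martingale sum $\sum_{j<\ell} w(\ell,j)\Delta M_{\ell,j}$, and (iii) the remainder $\RemU_n$, which I would handle through the alternative representation \eqref{eq:bar_r_n_representation}. Each block is bounded by a Burkholder/Rosenthal-type $L^p$ inequality for (nested) martingale increments, combined with the coefficient estimates of \Cref{lem:coef_properties_start}: the diagonal term gives $p\taumix^2 (\sum_\ell w(\ell,\ell)^2)^{1/2} \lesssim p\taumix^2/\sqrt{n-b_n+1}$; the off-diagonal term, using $w(\ell,j)\le 2/(n-b_n+1)$ on a band of width $b_n$, contributes $p^2\taumix^2\sqrt{b_n}/\sqrt n$ (the $p^2$ coming from iterating Burkholder twice for the bilinear martingale); and the $\RemU_n$ terms, using $|\Delta^{(1,0)}|, |\Delta^{(0,1)}|, |\Delta^{(1,1)}| \lesssim 1/(b_n(n-b_n+1))$, the band structure, and $\|\MKQ g\|_\infty \lesssim \taumix$, contribute at most $p\taumix^3/\sqrt n$ from the surviving martingale pieces plus lower-order boundary pieces from $\RemU_{n,rem}$ of size $\taumix^2 b_n/(b_n(n-b_n+1)) \cdot (\text{number of terms})$, which is $O(\taumix^2/\sqrt n)$ or smaller. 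Summing these matches the first two claimed rates.

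**The rank-one corrections.** For $b_n(u^\top X)^2$ and $b_n(v^\top X)^2$ I would first bound $u^\top X$ and $v^\top X$ in $L^p$ directly. Note $v^\top X = \frac{1}{n-b_n+1}\sum_t \pi_{b_n,t}(f)$ is an average of $n-b_n+1$ block means, and $u^\top X = \pi_n(f) - v^\top X$; a standard martingale $L^p$ bound using the Poisson solution gives $\PE_\xi^{1/p}[|v^\top X|^p], \PE_\xi^{1/p}[|u^\top X|^p] \lesssim p\taumix/\sqrt n$ (up to a $1/n$-order initial-distribution term). Then $\PE_\xi^{1/p}[b_n|u^\top X|^{2p}]^{1/1} \lesssim b_n \cdot (p\taumix/\sqrt n)^2 = p^2\taumix^2 b_n/n$, which is $\le p^2\taumix^2\sqrt{b_n}/\sqrt n$ when $b_n \le n$; same for the $v$ term. (One should be slightly careful that $b_n(v^\top X)^2$ has nonnegligible mean of order $b_n\sigma_\infty^2/n$ — this is exactly part of the batch-means bias and gets absorbed into the deterministic step above.)

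**Main obstacle.** The delicate point is the off-diagonal/bilinear martingale term and the martingale pieces inside $\RemU_n$: getting the correct $\sqrt{b_n}$ (not $b_n$) scaling requires exploiting that for fixed $\ell$ the inner sum $\sum_{j} w(\ell,j) M_j$ (resp. $\sum_j \Delta^{(1,0)}(\ell,j)M_j$) runs over a window of length $\lesssim b_n$ with coefficients $\lesssim 1/(n-b_n+1)$ (resp. $\lesssim 1/(b_n(n-b_n+1))$), so its conditional $L^2$ norm is $O(\sqrt{b_n}/(n-b_n+1))$, and then the outer martingale sum over $\ell$ contributes another $\sqrt{n}$ — the bookkeeping of which window each $j$ belongs to, and tracking the $p$-dependence through two applications of Burkholder–Rosenthal (yielding $p^2$), is where the real work lies. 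I also need to make sure the geometric decay in \eqref{eq:tau_mix_contraction} is used to get the truncation bias $\lesssim \taumix^2 (1/4)^{b_n/\taumix}$, which is $O(\taumix^2/\sqrt{b_n})$ for all $b_n$ (indeed much smaller once $b_n \gtrsim \taumix\log b_n$, but the stated bound is the safe uniform one).
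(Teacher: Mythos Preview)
Your overall architecture matches the paper's: decompose $\hat\sigma_{\OBM}^2(f)$ via \eqref{eq:OBM_equivalent}, apply \Cref{prop:quadr_form_decomposition} to $V_n(f)$, and bound the diagonal, off-diagonal, remainder $\RemU_n$, and rank-one pieces separately. Your treatment of the off-diagonal term $D_2$ and of the rank-one corrections $b_n(u^\top X)^2$, $b_n(v^\top X)^2$ is essentially the paper's. There are, however, two places where your plan diverges from what actually works.

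\textbf{The diagonal term.} You write the diagonal contribution as $\sum_\ell w(\ell,\ell)\bigl(\Delta M_{\ell,\ell}-\PE_\xi\Delta M_{\ell,\ell}\bigr)$ and propose to bound it in one shot by Burkholder. But $\Delta M_{\ell,\ell}-\PE_\xi[\Delta M_{\ell,\ell}]$ is \emph{not} a martingale increment: only $\Delta M_{\ell,\ell}-\CPE{\Delta M_{\ell,\ell}}{\F_{\ell-1}}=\Delta M_{\ell,\ell}-\widehat g(Z_{\ell-1})$ is, where $\widehat g(z)=\MKQ g^2(z)-(\MKQ g(z))^2$. The paper exploits the exact identity $\sum_\ell w(\ell,\ell)=1$ together with $\PE_\pi[\Delta M_{\ell,\ell}]=\sigma_\infty^2(f)$ to avoid the bias/fluctuation split entirely, writing $D_1=\sum_\ell w(\ell,\ell)\{\Delta M_{\ell,\ell}-\sigma_\infty^2(f)\}$ and then splitting into the martingale piece $D_{1,1}$ (your bound) and the functional piece $D_{1,2}=\sum_\ell w(\ell,\ell)\{\widehat g(Z_{\ell-1})-\sigma_\infty^2(f)\}$. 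The latter, handled via \Cref{lem:auxiliary_rosenthal_weighted} with $\|\widehat g\|_\infty\lesssim\taumix^2$, is precisely where the $p^{1/2}\taumix^3/\sqrt n$ in the first term of the theorem comes from. Your sketch omits this piece.

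\textbf{The remainder $\RemU_n$.} You claim the ``surviving martingale pieces'' of $\RemU_n$ contribute $p\taumix^3/\sqrt n$. This is too optimistic for the first term in \eqref{eq:bar_r_n_representation}, namely $\sum_{\ell}\MKQ g(Z_{\ell-1})\sum_{j\le\ell-2}\Delta^{(1,0)}(\ell,j)M_j$. The inner sum is indeed a martingale in $j$, but the outer sum is \emph{not} a martingale in $\ell$: the factor $\MKQ g(Z_{\ell-1})$ is $\F_{\ell-1}$-measurable and the inner sum is $\F_{\ell-2}$-measurable, so the $\ell$-th summand has no reason to be conditionally centred. Your proposed ``two applications of Burkholder'' therefore does not apply here. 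The paper instead bounds the inner martingale sum by $p\taumix/\bigl((n-b_n+1)\sqrt{b_n}\bigr)$ via Burkholder and then uses Minkowski over $\ell$, picking up a full factor $n$ and yielding $p\taumix^2/\sqrt{b_n}$. This, not the classical truncation bias, is the actual source of the $p^2\taumix^2/\sqrt{b_n}$ term in the theorem; the deterministic truncation error $\sum_{|\ell|>b_n}\rho^{(f)}(\ell)$ is in fact of the smaller order $\taumix(1/4)^{b_n/\taumix}\lesssim\taumix^2/b_n$ and gets absorbed.

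In short: your route is the same as the paper's, but you have misattributed the origin of two of the three rates. Fixing the diagonal term requires the additional $\widehat g$ split (giving $\taumix^3$), and fixing the $\RemU_n$ bound requires accepting that the outer sum in the first line of \eqref{eq:bar_r_n_representation} costs a full factor $n$, not $\sqrt n$, which is what forces the $1/\sqrt{b_n}$ rate.
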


\begin{proof}
Recall that, due to the formulas \eqref{eq:key_representation_U_stat} and \eqref{eq:OBM_equivalent}, and the fact that $\sum_{\ell=1}^{n}w(\ell,\ell)=1$, it holds that 
\begin{multline}
\label{eq:D_1_2_def}
\hat \sigma_{\OBM}^2(f) - \sigma_{\infty}^2(f) = \underbrace{\sum_{\ell=1}^{n} w(\ell,\ell) \{ \Delta M_{\ell,\ell} - \sigma_{\infty}^2(f)\}}_{D_1} \\ + \underbrace{\sum_{\ell=1}^{n}\sum_{j=1}^{\ell-1}w(\ell,j) \Delta M_{\ell,j}}_{D_2}  + \RemU_{n} + b_n (u^\top X )^2 -  b_n (v^\top X)^2\eqsp.
\end{multline}
Note that, due to \cite[Chapter~21]{douc:moulines:priouret:soulier:2018}, $\PE_{\pi}\bigl[(g(Z_{\ell}) - \MKQ g(Z_{\ell-1})^2\bigr] = \sigma^2_{\infty}(f)$. In order to proceed further, we introduce the function 
\[
\widehat{g}(z) = \MKQ g^2(z) - (\MKQ g(z))^2\eqsp, \quad z \in \Zset\eqsp.
\]
Note that it holds that $\|g \|_\infty \lesssim \taumix $ and $\|\widehat{g}\|_{\infty} \lesssim \taumix^2$. Then we notice that 
\[
\CPE[\xi]{(g(Z_{\ell}) - \MKQ g(Z_{\ell-1}))^2}{\F_{\ell-1}} = \widehat{g}(Z_{\ell-1})\eqsp,
\]
and we can represent $D_1$ as
\begin{equation}
\label{eq:T_1_split}
D_1 = D_{1,1} + D_{1,2}\eqsp,
\end{equation}
where
\begin{equation}
D_{1,1} = \sum_{\ell=1}^{n-1}w(\ell, \ell)\bigl\{(g(Z_{\ell}) - \MKQ g(Z_{\ell-1}))^2 - \widehat{g}(Z_{\ell-1}) \bigr\}\eqsp, \quad 
D_{1,2} = \sum_{\ell=1}^{n} w(\ell, \ell)\{\widehat{g}(Z_{\ell-1}) - \sigma^2_{\infty}(f)\}\eqsp.
\end{equation}
Now we estimate the moments of $D_{1,1}$ and $D_{1,2}$ separately. It is easy to see that $D_{1,1}$ is a weighted sum of martingale-difference sequence w.r.t. $\F_{\ell-1}$, thus, using Burkholder's inequality \cite[Theorem~8.6]{osekowski:2012}, we get that 
\begin{align}
\PE_{\xi}^{1/p}[|D_{1,1}|^p] &\leq p \PE_{\xi}^{1/p}[\bigl(\sum_{\ell=1}^{n} w^2(\ell, \ell)\bigl\{(g(Z_{\ell}) - \MKQ g(Z_{\ell-1}))^2 - \widehat{g}(Z_{\ell-1})\bigr\}^2 \bigr)^{p/2}]\\
&\lesssim   p \taumix^2 (\sum_{\ell=1}^{n} w^2(\ell, \ell) \bigr)^{1/2} \leq \frac{p \taumix^2}{(n-b_n+1)^{1/2}}\eqsp.
\end{align}
In the last inequality we have used the fact that 
\[
\sum_{\ell = 1}^{n} w^2(\ell, \ell) = 
\frac{1}{(n - b_n + 1)^2} \left[
\frac{(b_n - 1)(2b_n - 1)}{3 b_n} + (n - 2b_n + 2)
\right] \leq \frac{1}{n-b_n+1}\eqsp.
\]
Similarly, applying \Cref{lem:auxiliary_rosenthal_weighted}, we get using expression for the weights $w(\ell,\ell)$, that  
\[
\PE_{\xi}^{1/p}[|D_{1,2}|^{p}] \lesssim \frac{p^{1/2} \taumix^{3}}{(n-b_n+1)^{1/2}} + \frac{ \taumix^{3}}{n-b_n+1} \lesssim \frac{p^{1/2} \taumix^{3}}{(n-b_n+1)^{1/2}}\eqsp.
\]
Combining the bounds for $D_{1,1}$ and $D_{1,2}$, we end up with the following bound:
\begin{equation}
\label{eq:D_1_bound}
\PE_{\xi}^{1/p}[|D_{1}|^{p}] \lesssim  \frac{p \taumix^2 + p^{1/2} \taumix^3}{(n-b_n+1)^{1/2}}\eqsp.
\end{equation}
Now we proceed further and our next step is to provide an upper bound on the term $D_2$ from \eqref{eq:D_1_2_def} defined as 
\begin{equation}
\label{eq:D_2_def_u_stat}
D_2 = \sum_{\ell=1}^{n}\sum_{j=1}^{\ell-1}w(\ell,j) \Delta M_{\ell,j} = \sum_{\ell=1}^{n} (g(Z_{\ell}) - \MKQ g(Z_{\ell-1})) \sum_{j=1}^{\ell-1}w(\ell,j)  (g(Z_j) - \MKQ g(Z_{j-1}))\eqsp.
\end{equation}
Since the sum above is a weighted sum of a martingale-difference sequence, we get using Burkholder's inequality \cite[Theorem~8.6]{osekowski:2012} that 
\begin{align}
\label{eq:T_2_bound_martingale_diff}
\PE_{\xi}^{1/p}[|D_2|^{p}] 
&\leq p \PE_{\xi}^{1/p}\bigl[ \bigl(\sum_{\ell=1}^{n} M_{\ell}^{2}\bigl\{\sum_{j=1}^{\ell-1}w(\ell,j)  M_{j}\bigr\}^{2}\bigr)^{p/2}\bigr] \\
&\leq p \biggl\{\sum_{\ell=1}^{n}\PE^{2/p}_{\xi}\bigl[ M_{\ell}^{p} \bigl|\sum_{j=1}^{\ell-1}w(\ell,j)  M_{j}\bigr|^{p} \bigr]\biggr\}^{1/2}\eqsp.
\end{align}
Proceeding now with the inner terms in the sum above, we get 
\begin{align}
\PE^{2/p}_{\xi}\bigl[ M_{\ell}^{p} \bigl| \sum_{j=1}^{\ell-1}w(\ell,j)  M_{j}\bigr|^{p} \bigr] \leq \PE^{1/p}_{\xi}\bigl[|M_{\ell}|^{2p}\bigr] \PE^{1/p}_{\xi}\bigl[\bigl|\sum_{j=1}^{\ell-1}w(\ell,j)  M_{j}\bigr|^{2p}\bigr]\eqsp.
\end{align}
Now we can apply again Burkholder's inequality to the second term above and obtain 
\begin{align}
\PE^{1/2p}_{\xi}\bigl[\bigl| \sum_{j=1}^{\ell-1}w(\ell,j)  M_{j}\bigr|^{2p}\bigr] 
&\leq 2p \PE_{\xi}^{1/2p}\bigl[\bigl(\sum_{j=1}^{\ell-1}w^2(\ell,j) M_{j}^2\bigr)^{p}\bigr] \\
&\lesssim p \taumix \bigl(\sum_{j=1}^{\ell-1}w^2(\ell,j)\bigr)^{1/2} \\
&\lesssim \frac{p \taumix b_{n}^{1/2}}{n-b_n+1}\eqsp.
\end{align}
Combining the above bounds, we obtain the following inequality:
\begin{align}
\PE^{2/p}_{\xi}\bigl[ M_{\ell}^{p} \bigl| \sum_{j=1}^{\ell-1}w(\ell,j)  M_{j}\bigr|^{p} \bigr] \leq  \frac{p^2 \taumix^4 b_n}{(n-b_n+1)^2}\eqsp,
\end{align}
and, combining everything inside \eqref{eq:T_2_bound_martingale_diff}, we obtain 
\begin{align}
\label{eq:D_2_bound}
\PE_{\xi}^{1/p}[|D_2|^{p}] \lesssim \frac{p^2 \taumix^2 \sqrt{b_n}}{\sqrt{n-b_n+1}}\eqsp.
\end{align}
Now it remains to bound the moments of the term $\RemU_{n}$. Using \Cref{lem:bar_r_n_bound},
\begin{equation}
\label{eq:Rem_U_bound}
\PE_{\xi}^{1/p}[|\RemU_{n}|^{p}] \lesssim \frac{p \taumix^{2}}{\sqrt{b_n}} + \frac{p \taumix^2 \sqrt{b_n}} {n - b_n+1} + \frac{\taumix^2}{b_n} \eqsp.
\end{equation}
Our proof of the bound \eqref{eq:Rem_U_bound} is based on re-arranging the terms in the representation \eqref{eq:RemU_definition} in a way suggested in \Cref{lem:bar_r_n_representation}. It remains to bound 
$\PE_{\xi}^{1/p}[|u^\top X|^{2p}]$ and $\PE_{\xi}^{1/p}[|v^\top X|^{2p}]$, where the vectors $u$ and $v$ are defined in \eqref{eq:vectors_u_v_def}. Using the expressions in \eqref{eq:vectors_u_v_def}, we obtain that the elements of $u$ and $v$ are written as 
\[
v_\ell =
\begin{cases}
\frac{\ell}{b_n(n - b_n + 1)}, & \text{if } 1 \leq \ell \leq b_n - 1\eqsp, \\
\frac{1}{n - b_n + 1}, & \text{if } b_n \leq \ell \leq n - b_n + 1, \\
\frac{n - \ell + 1}{b_n(n - b_n + 1)}, & \text{if } n - b_n + 2 \leq \ell \leq n\eqsp,
\end{cases}
\]
and $u_{\ell} = 1/n - v_{\ell}$. Hence, with direct summation we obtain that 
\[
|v_1| + |v_n| + \sum_{k=1}^{n-1}|v_{k+1}-v_k| \lesssim \frac{1}{n}\eqsp, \quad |u_1| + |u_n| + \sum_{k=1}^{n-1}|u_{k+1}-u_k| \lesssim \frac{1}{n}\eqsp,
\]
and 
\[
\sum_{\ell=1}^{n}v_{\ell}^2 \lesssim \frac{1}{n}\eqsp, \quad \sum_{\ell=1}^{n}u_{\ell}^2 = \sum_{\ell=1}^{n}v_{\ell}^2 - 1/n  \lesssim \frac{1}{n}\eqsp.
\]
Then 
\Cref{lem:auxiliary_rosenthal_weighted}  implies that 
\begin{equation}
\label{eq:rosenthal_inequality_applied}
\PE_{\xi}^{1/p}[|v^\top X|^{2p}] \lesssim \frac{p\taumix}{n} + \frac{\taumix^2}{n^2}\eqsp, 
\end{equation}
and the same bound (up to constant factors) on $\PE_{\xi}^{1/p}[|u^\top X|^{2p}]$. To conclude the proof it remains to combine the bounds \eqref{eq:rosenthal_inequality_applied}, \eqref{eq:Rem_U_bound}, \eqref{eq:D_2_bound}, and \eqref{eq:D_1_bound} with the representation \eqref{eq:D_1_2_def}.
\end{proof}

\section{Postponed proofs}
\label{appendix:quadratic}
\subsection{Proof of \Cref{prop:quadr_form_decomposition}}
\label{sec:prrof_quadr_form_decomposition}
Introduce the following notations:
\begin{equation}
\label{eq:Markov_kernel_doubled_defi}
\bar{\pi}(\rmd u, \rmd v) = \pi(\rmd u) \pi(\rmd v)\eqsp, \quad 
\MKR_2(x,y,du,dv) = \sum_{m = 0}^{\infty} \sum_{k = 0}^{\infty}\bigl(\MKQ^{m}(x,\rmd u) - \pi(\rmd u)\bigr)\bigl(\MKQ^{k}(y,\rmd v) - \pi(\rmd v)\bigr)\eqsp.
\end{equation}
Note that in the above expression for any $x,y \in \Zset$, $\MKR_2(x,y,du,dv)$ is a finite (signed) measure. Let $h(x, y) = f(x) f(y)$. Then $\bar{\pi}(h) = 0$. Following the notations introduced in \cite{atchade2014martingale}, we obtain that 
\begin{align}
\label{eq:bar_G_2_def}
&\bar{h}_1(x) = \int h(x,z)\pi(\rmd z) = 0\eqsp, \quad \bar{h}_2(x,y) = h(x,y) - \bar{h}_1(x) - \bar{h}_1(y) = h(x,y)\eqsp, \\
&\bar{G}_2(x,y) = \int \int \bar{h}_2(u,v) \MKR_2(x,y,\rmd u,\rmd v) = g(x)g(y)\eqsp, \\
&\MKQ \bar{G}_2(x,y) = g(y) \MKQ g(x)\eqsp, \quad \MKQ^{2} \bar{G}_2(x,y) = \MKQ g(x) \MKQ g(y)\eqsp.
\end{align}  
Introduce also the function
\begin{equation}
\label{eq:Lambda_2_def}
\begin{split}
\Lambda_2(x_1,x_2,y_1,y_2)
&= \bar{G}_2(y_1,y_2) - \MKQ \bar{G}_2(x_2,y_1) - \MKQ \bar{G}_2(x_1,y_2) + \MKQ^2 \bar{G}_2(x_1,x_2) \\
&= g(y_1)g(y_2) - g(y_1) \MKQ g(x_2) - g(y_2) \MKQ g(x_1) + \MKQ g(x_1)  \MKQ g(x_2) \\
&= (g(y_1) - \MKQ g(x_1))(g(y_2) - \MKQ g(x_2))\eqsp.
\end{split}
\end{equation}
With this notation for $\Lambda_2(x_1,x_2,y_1,y_2)$ we have the following key relation:
\[
h(x,y) = f(x)f(y) = (g(x)-\MKQ g(x))(g(y)-\MKQ g(y)) = \Lambda_2(x,y,x,y)\eqsp.
\]
The rest of the proof follows from \cite[Lemma~2.2]{atchade2014martingale}.

\subsection{Proof of the error representation from \Cref{lem:bar_r_n_representation}}
\label{sec:proof_remainder_representation}
Recall that the remainder term $\RemU_{n}$ is decomposed into a sum of three terms, denoted, respectively, as $T_1$, $T_2$, and $T_3$, and outlined in \eqref{eq:reminder_u_stat}. Now we proceed with $T_1$, $T_2$, $T_3$ separately. We first consider the term $T_1$ and write:
\begin{align}
T_1 = \underbrace{\sum_{\ell=1}^{n}\sum_{j=1}^{\ell} w(\ell,j) g(Z_{j}) \MKQ g(Z_{\ell-1})}_{T_{1,1}} - \underbrace{\sum_{\ell=1}^{n}\sum_{j=1}^{\ell} w(\ell,j) g(Z_{j}) \MKQ g(Z_{\ell})}_{T_{1,2}}\eqsp,
\end{align}
and now transform $T_{1,1}$, using the definition of $M_{j}$ in \eqref{eq:martingale_increment_def}:
\begin{align}
T_{1,1}
&= \sum_{\ell=1}^{n} \MKQ g(Z_{\ell-1}) \sum_{j=1}^{\ell} w(\ell,j) M_{j} +  \sum_{\ell=1}^{n} \MKQ g(Z_{\ell-1}) \sum_{j=1}^{\ell} w(\ell,j) \MKQ g(Z_{j-1}) \\
&= \sum_{\ell=2}^{n} \MKQ g(Z_{\ell-1}) \sum_{j=1}^{\ell-1} w(\ell,j) M_{j}  + \sum_{\ell=1}^{n} w(\ell,\ell) \MKQ g(Z_{\ell-1}) M_{\ell} + \sum_{\ell=1}^{n} \MKQ g(Z_{\ell-1}) \sum_{j=1}^{\ell} w(\ell,j) \MKQ g(Z_{j-1})\eqsp.
\end{align}
Similarly, we transform $T_{1,2}$ into 
\begin{align}
T_{1,2} &= \underbrace{\sum_{\ell=1}^{n} \MKQ g(Z_{\ell}) \sum_{j=1}^{\ell} w(\ell,j)  M_{j}}_{T_{1,2,1}} + \underbrace{\sum_{\ell=1}^{n} \MKQ g(Z_{\ell}) \sum_{j=1}^{\ell} w(\ell,j) \MKQ g(Z_{j-1})}_{T_{1,2,2}}\eqsp,
\end{align}
and we further transform $T_{1,2,1}$ as follows:
\begin{align}
T_{1,2,1} 
&=
\sum_{\ell=2}^{n+1}  \MKQ g(Z_{\ell-1})  \sum_{j=1}^{\ell-1} w(\ell-1,j)  M_{j} \\
&= \sum_{\ell=2}^{n}  \MKQ g(Z_{\ell-1})  \sum_{j=1}^{\ell-1} w(\ell-1,j)  M_{j} + \MKQ g(Z_n) \sum_{j=1}^{n}w(n,j) M_{j}\eqsp.
\end{align}
Hence, combining the above expressions together, we obtain that 
\begin{align}
\label{eq:T_1_decomposition_filled}
T_1 = \sum_{\ell=2}^{n} \MKQ g(Z_{\ell-1}) \sum_{j=1}^{\ell-1} \Delta^{(1,0)}(\ell,j) M_{j} + V_{1} + \RemU_{n,1} \eqsp,
\end{align}
where we have set
\begin{equation}
\label{eq:Delta_1_0_def}
\Delta^{(1,0)}(\ell,j) = w(\ell,j) - w(\ell-1,j)\eqsp,
\end{equation}
and
\begin{equation}
\label{eq:RemU_definition}
\begin{split}
V_{1} &= \sum_{\ell=1}^{n} \MKQ g(Z_{\ell-1}) \sum_{j=1}^{\ell} w(\ell,j) \MKQ g(Z_{j-1}) - \sum_{\ell=1}^{n} \MKQ g(Z_{\ell}) \sum_{j=1}^{\ell} w(\ell,j) \MKQ g(Z_{j-1}) \\
\RemU_{n,1} &= \sum_{\ell=1}^{n} w(\ell,\ell) \MKQ g(Z_{\ell-1}) M_{\ell} - \MKQ g(Z_n) \sum_{j=1}^{n}w(n,j) M_{j}\eqsp.
\end{split}
\end{equation}
We prefer here to extract one more diagonal term from $T_{1}$:
\begin{align}
\label{eq:T_1_decomposition_final}
T_1 &= \sum_{\ell=3}^{n} \MKQ g(Z_{\ell-1}) \sum_{j=1}^{\ell-2} \Delta^{(1,0)}(\ell,j) M_{j} \\
&\qquad \qquad \qquad \qquad +\sum_{\ell=2}^{n} \Delta^{(1,0)}(\ell,\ell-1) \MKQ g(Z_{\ell-1}) M_{\ell-1} + V_{1} + \RemU_{n,1} \eqsp,
\end{align}
where the terms $V_{1}$ and $\RemU_{n,1}$ are defined in \eqref{eq:RemU_definition}. Now we proceed with the term $T_2$. Similarly to the above decomposition of $T_1$, we can write:
\begin{align}
T_2 = \underbrace{\sum_{\ell=1}^{n} M_{\ell} \sum_{j=1}^{\ell} w(\ell,j) \left(\MKQ g(Z_{j-1}) - \MKQ g(Z_{j})\right)}_{T_{2,1}} + \underbrace{\sum_{\ell=1}^{n} \sum_{j=1}^{\ell} w(\ell,j) \MKQ g(Z_{\ell-1}) \left(\MKQ g(Z_{j-1}) - \MKQ g(Z_{j})\right)}_{T_{2,2}}\eqsp. 
\end{align}
Now, proceeding with the term $T_{2,1}$, we obtain that
\begin{align}
T_{2,1} 
&= \sum_{\ell=1}^{n} M_{\ell} \sum_{j=1}^{\ell} w(\ell,j) \MKQ g(Z_{j-1}) 
 - \sum_{\ell=1}^{n} M_{\ell} \sum_{j=1}^{\ell} w(\ell,j)  \MKQ g(Z_{j}) \\
 &= \sum_{\ell=1}^{n} M_{\ell} \sum_{j=1}^{\ell} w(\ell,j) \MKQ g(Z_{j-1}) - \sum_{\ell=1}^{n} M_{\ell} \sum_{j=1}^{\ell} w(\ell,j-1)  \MKQ g(Z_{j-1}) \\
 & \qquad -  \sum_{\ell=1}^{n} w(\ell,\ell) M_{\ell} \MKQ g(Z_{\ell}) + 
 \MKQ g(Z_0) \sum_{\ell=1}^{n} w(\ell,0)  M_{\ell} \\
 &= \sum_{\ell=1}^{n} M_{\ell} \sum_{j=1}^{\ell} \Delta^{(0,1)}(\ell,j) \MKQ g(Z_{j-1}) - \sum_{\ell=1}^{n} w(\ell,\ell) M_{\ell} \MKQ g(Z_{\ell}) + 
 \MKQ g(Z_0) \sum_{\ell=1}^{n} w(\ell,0)  M_{\ell}\eqsp,
\end{align}
where we have set 
\begin{align}
\label{eq:Delta_0_1_def}
\Delta^{(0,1)}(\ell,j) = w(\ell,j) - w(\ell,j-1)\eqsp.
\end{align}
Hence, combining the above decomposition, we get
\begin{align}
\label{eq:T_2_def_r_n_decomposition}
T_2 = \sum_{\ell=1}^{n} M_{\ell} \sum_{j=1}^{\ell-1} \Delta^{(0,1)}(\ell,j) \MKQ g(Z_{j-1}) + V_{2} + \RemU_{n,2} \eqsp,
\end{align}
where we have set 
\begin{equation}
\label{eq:RemU_2_definition}
\begin{split}
V_{2} &= \sum_{\ell=1}^{n} \sum_{j=1}^{\ell} w(\ell,j) \MKQ g(Z_{\ell-1}) \left(\MKQ g(Z_{j-1}) - \MKQ g(Z_{j})\right)\eqsp, \\
\RemU_{n,2} &= -\sum_{\ell=1}^{n} w(\ell,\ell) M_{\ell} \MKQ g(Z_{\ell}) + \sum_{\ell=1}^{n} \Delta^{(0,1)}(\ell,\ell) M_{\ell} \MKQ g(Z_{\ell-1})\eqsp,
\end{split}
\end{equation}
and used the convention $w(\ell,0) = 0$. Combining now the terms $V_1,V_2$ from \eqref{eq:T_1_decomposition_final} and \eqref{eq:T_2_def_r_n_decomposition}, together with $T_3$ from \eqref{eq:reminder_u_stat}, we obtain:
\begin{align}
\label{eq:V_1_V_2_T_3_sum}
V_1 + V_2 + T_3 &= \underbrace{\sum_{\ell=1}^{n} \MKQ g(Z_{\ell-1}) \sum_{j=1}^{\ell} w(\ell,j) \MKQ g(Z_{j-1})}_{Q_1} - \underbrace{\sum_{\ell=1}^{n} \MKQ g(Z_{\ell}) \sum_{j=1}^{\ell} w(\ell,j) \MKQ g(Z_{j-1})}_{Q_2} \\
&- \underbrace{\sum_{\ell=1}^{n} \sum_{j=1}^{\ell} w(\ell,j) \MKQ g(Z_{\ell-1}) \MKQ g(Z_{j})}_{Q_3} + \underbrace{\sum_{\ell=1}^{n}\sum_{j=1}^{\ell} w(\ell,j) \MKQ g(Z_{j}) \MKQ g(Z_{\ell})}_{Q_4}\eqsp.
\end{align}
Note that the expression above is a quadratic form in random variables $\{\MKQ g(Z_{\ell})\}_{\ell=0}^{n}$, and we can re-write it in a more compact way, setting $t_{\ell} := \MKQ g(Z_{\ell})$. In all expressions below we follow the convention $\sum_{\ell=m}^{n}a_{\ell} = 0$, provided that $n < m$. 
\begin{align}
Q_1 &= \sum_{\ell=2}^{n}\sum_{j=1}^{\ell-1} t_{\ell-1} w(\ell,j) t_{j-1} + \sum_{\ell=1}^{n}w(\ell,\ell) t_{\ell-1}^2 \\
&= \sum_{\ell=3}^{n}\sum_{j=1}^{\ell-2} t_{\ell-1} w(\ell,j) t_{j-1} + \sum_{\ell=1}^{n}w(\ell,\ell) t_{\ell-1}^2 + \sum_{\ell=2}^{n}  w(\ell,\ell-1) t_{\ell-1} t_{\ell-2}\eqsp.
\end{align}
Similarly, with elementary algebra:
\begin{align}
Q_2 &= \sum_{\ell=1}^{n} \sum_{j=1}^{\ell} w(\ell,j) t_{\ell} t_{j-1} 
= \sum_{\ell=1}^{n-1} \sum_{j=1}^{\ell} w(\ell,j) t_{\ell} t_{j-1} + \sum_{j=1}^{n} w(n,j) t_{n} t_{j-1} \\
&= \sum_{\ell=2}^{n} \sum_{j=1}^{\ell-1} w(\ell-1,j) t_{\ell-1} t_{j-1} + \sum_{j=1}^{n} w(n,j) t_{n} t_{j-1} \\
&= \sum_{\ell=3}^{n} \sum_{j=1}^{\ell-2} w(\ell-1,j) t_{\ell-1} t_{j-1} + \sum_{\ell=1}^{n-1} w(\ell,\ell) t_{\ell} t_{\ell-1} + \sum_{j=1}^{n} w(n,j) t_{n} t_{j-1}\eqsp.
\end{align}

\begin{align}
Q_3 &= \sum_{\ell=1}^{n} \sum_{j=1}^{\ell} w(\ell,j) t_{\ell-1} t_{j} = \sum_{\ell=1}^{n-1} \sum_{j=1}^{\ell} w(\ell,j) t_{\ell-1} t_{j} + \sum_{j=1}^{n} w(n,j) t_{n-1} t_{j} \\
&= \sum_{\ell=1}^{n-1} \sum_{j=2}^{\ell+1} w(\ell,j-1) t_{\ell-1} t_{j-1} + \sum_{j=1}^{n} w(n,j) t_{n-1} t_{j} \\
&= \sum_{\ell=3}^{n-1} \sum_{j=2}^{\ell-1} w(\ell,j-1) t_{\ell-1} t_{j-1} + \sum_{\ell=2}^{n-1} w(\ell,\ell-1) t_{\ell-1}^2 + \sum_{\ell=1}^{n-1} w(\ell,\ell) t_{\ell-1} t_{\ell} + \sum_{j=1}^{n} w(n,j) t_{n-1} t_{j} \\
&= \sum_{\ell=3}^{n-1} \sum_{j=1}^{\ell-2} w(\ell,j-1) t_{\ell-1} t_{j-1} + \sum_{\ell=3}^{n-1}w(\ell,\ell-2) t_{\ell-1}t_{\ell-2} - \sum_{\ell=3}^{n-1}w(\ell,0) t_{\ell-1} t_{0} \\
& \qquad + \sum_{\ell=2}^{n-1} w(\ell,\ell-1) t_{\ell-1}^2 + \sum_{\ell=1}^{n-1} w(\ell,\ell) t_{\ell-1} t_{\ell} + \sum_{j=1}^{n} w(n,j) t_{n-1} t_{j} \\
&= \sum_{\ell=3}^{n} \sum_{j=1}^{\ell-2} w(\ell,j-1) t_{\ell-1} t_{j-1} + \sum_{\ell=3}^{n-1}w(\ell,\ell-2) t_{\ell-1}t_{\ell-2} - \sum_{\ell=3}^{n-1}w(\ell,0) t_{\ell-1} t_{0} - \sum_{j=1}^{n-2} w(n,j-1) t_{n-1} t_{j-1} \\
&\qquad + \sum_{\ell=2}^{n-1} w(\ell,\ell-1) t_{\ell-1}^2 + \sum_{\ell=1}^{n-1} w(\ell,\ell) t_{\ell-1} t_{\ell} + \sum_{j=1}^{n} w(n,j) t_{n-1} t_{j}\eqsp.
\end{align}
Finally, we proceed with the expression for the term $Q_4$:
\begin{align}
Q_4 &= \sum_{\ell=1}^{n}\sum_{j=1}^{\ell} w(\ell,j) t_{\ell} t_{j}
= \sum_{\ell=2}^{n+1}\sum_{j=2}^{\ell} w(\ell-1,j-1) t_{\ell-1} t_{j-1} \\
&= \sum_{\ell=2}^{n}\sum_{j=2}^{\ell} w(\ell-1,j-1) t_{\ell-1} t_{j-1} + \sum_{j=2}^{n+1} w(n,j-1) t_{n} t_{j-1} \\
&= \sum_{\ell=3}^{n}\sum_{j=2}^{\ell-1} w(\ell-1,j-1) t_{\ell-1} t_{j-1} + \sum_{\ell=2}^{n}w(\ell-1,\ell-1) t_{\ell-1}^2 + \sum_{j=2}^{n+1} w(n,j-1) t_{n} t_{j-1} \\
&= \sum_{\ell=3}^{n}\sum_{j=2}^{\ell-2} w(\ell-1,j-1) t_{\ell-1} t_{j-1} + \sum_{\ell=3}^{n}w(\ell-1,\ell-2) t_{\ell-1} t_{\ell-2}  \\
&\qquad \qquad +\sum_{\ell=2}^{n}w(\ell-1,\ell-1) t_{\ell-1}^2 + \sum_{j=2}^{n+1} w(n,j-1) t_{n} t_{j-1} \\
&= 
\sum_{\ell=3}^{n}\sum_{j=1}^{\ell-2} w(\ell-1,j-1) t_{\ell-1} t_{j-1} - \sum_{\ell=3}^{n} w(\ell-1,0) t_{\ell-1} t_{0} + \sum_{\ell=3}^{n}w(\ell-1,\ell-2) t_{\ell-1} t_{\ell-2} \\
&\qquad \qquad +\sum_{\ell=2}^{n}w(\ell-1,\ell-1) t_{\ell-1}^2 + \sum_{j=2}^{n+1} w(n,j-1) t_{n} t_{j-1}\eqsp.
\end{align}
Combining the above expressions for $Q_1,Q_2,Q_3,Q_4$, we obtain that 
\begin{align}
Q_1 - Q_2 - Q_3 + Q_4 = \sum_{\ell=3}^{n} \sum_{j=1}^{\ell-2} \Delta^{(1,1)}(\ell,j) t_{\ell-1} t_{j-1} + \RemU_{n,3}\eqsp,
\end{align}
where we have set the coefficients
\[
\Delta^{(1,1)}(\ell,j) = w(\ell,j) - w(\ell,j-1) - w(\ell-1,j) + w(\ell-1,j-1)\eqsp,
\]
and the remainder term 
\begin{align}
\RemU_{n,3} 
&= \sum_{\ell=1}^{n}w(\ell,\ell) t_{\ell-1}^2 + \sum_{\ell=2}^{n}  w(\ell,\ell-1) t_{\ell-1} t_{\ell-2}  - \sum_{\ell=1}^{n-1} w(\ell,\ell) t_{\ell} t_{\ell-1} - \sum_{j=1}^{n} w(n,j) t_{n} t_{j-1} \\
& - \sum_{\ell=3}^{n-1}w(\ell,\ell-2) t_{\ell-1}t_{\ell-2} + \sum_{\ell=3}^{n-1}w(\ell,0) t_{\ell-1} t_{0} + \sum_{j=1}^{n-2} w(n,j-1) t_{n-1} t_{j-1} \\
& - \sum_{\ell=2}^{n-1} w(\ell,\ell-1) t_{\ell-1}^2 - \sum_{\ell=1}^{n-1} w(\ell,\ell) t_{\ell-1} t_{\ell} - \sum_{j=1}^{n} w(n,j) t_{n-1} t_{j} - \sum_{\ell=3}^{n} w(\ell-1,0) t_{\ell-1} t_{0} \\
& + \sum_{\ell=3}^{n}w(\ell-1,\ell-2) t_{\ell-1} t_{\ell-2} + \sum_{\ell=2}^{n}w(\ell-1,\ell-1) t_{\ell-1}^2 + \sum_{j=2}^{n+1} w(n,j-1) t_{n} t_{j-1}\eqsp.
\end{align}
With the algebraic manipulations on $\RemU_{n,3}$ defined above, we obtain, using in addition that $w(\ell,0) = w(0,\ell) = 0$ for any $\ell$, that 
\begin{equation}
\begin{split}
\label{eq:rem_U_3_def}
\RemU_{n,3} 
&= \sum_{\ell=2}^{n-1}\bigl( w(\ell,\ell) + w(\ell-1,\ell-1) - w(\ell,\ell-1) \bigr)  t_{\ell-1}^2 + w(1,1)t_0^2 + (w(n,n)+w(n-1,n-1))t_{n-1}^2 \\
& + \sum_{\ell=2}^{n}\Delta^{(m)}(\ell,\ell) t_{\ell-1}t_{\ell-2} + w(n,n-2)t_{n-1}t_{n-2} \\
& - \sum_{j=1}^{n} \Delta^{(0,1)}(n,j) t_{n} t_{j-1} + w(n,n) t_n^2 - \sum_{j=n-2}^{n} w(n,j) t_{n-1} t_{j}\eqsp.
\end{split}
\end{equation}
In order to complete the proof, it remains to combine \eqref{eq:rem_U_3_def}, \eqref{eq:T_2_def_r_n_decomposition}, and \eqref{eq:T_1_decomposition_final}.

\begin{lemma}
\label{lem:bar_r_n_remainder_bound}
Under assumptions of \Cref{OBM concentration lemma}, it holds for any $p \geq 2$ that 
\begin{equation}
\label{eq:bar_r_n_reminader_bound}
\PE_{\xi}^{1/p}|\RemU_{n,rem}|^{p} \lesssim \frac{p \taumix^2 \sqrt{b_n}} {n - b_n+1} + \frac{\taumix^2}{b_n}\eqsp,
\end{equation}
where the remainder term $\RemU_{n,rem}$ is defined in \eqref{eq:rem_u_n_def}.
\end{lemma}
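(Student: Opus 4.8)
The plan is to bound each of the eight groups of terms appearing in the explicit expression \eqref{eq:rem_u_n_def} for $\RemU_{n,rem}$ separately, and then sum the resulting bounds. All the terms are built out of the bounded random variables $\MKQ g(Z_{k})$ (which satisfy $\supnorm{\MKQ g} \lesssim \supnorm{g} \lesssim \taumix$) and the martingale increments $M_{j}$ (which satisfy $\PE_{\xi}^{1/p}[|M_{j}|^{p}] \lesssim p^{1/2}\taumix$ by Burkholder's inequality, or simply $\supnorm{M_{j}} \lesssim \taumix$), weighted by the OBM coefficients $w(\ell,j)$ and their differences $\Delta^{(0,1)}(\ell,j)$, $\Delta^{(m)}(\ell,\ell)$ estimated in \Cref{lem:coef_properties_start}. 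The key quantitative inputs are: $|w(\ell,j)| \lesssim 1/(n-b_n+1)$ and $w(\ell,j) = 0$ for $|\ell-j| > b_n$; $|\Delta^{(0,1)}(n,j)| \lesssim 1/(b_n(n-b_n+1))$; and $|\Delta^{(m)}(\ell,\ell)| \lesssim 1/(b_n(n-b_n+1))$.

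Concretely, I would proceed term by term. (i) For the two ``boundary'' martingale sums $\MKQ g(Z_n)\sum_{j=1}^{n} w(n,j) M_j$, the inner sum has at most $b_n$ nonzero terms each of size $\lesssim 1/(n-b_n+1)$, so after applying Burkholder's inequality to $\sum_{j} w(n,j) M_j$ one gets $\PE_{\xi}^{1/p}[|\sum_j w(n,j)M_j|^p] \lesssim p\taumix (\sum_j w^2(n,j))^{1/2} \lesssim p\taumix \sqrt{b_n}/(n-b_n+1)$; multiplying by $\supnorm{\MKQ g}\lesssim\taumix$ gives the first term $p\taumix^2\sqrt{b_n}/(n-b_n+1)$ in \eqref{eq:bar_r_n_reminader_bound}. (ii) The term $\MKQ g(Z_n)\sum_{j}\Delta^{(0,1)}(n,j)\MKQ g(Z_{j-1})$ is a deterministic-coefficient sum of bounded variables: at most $b_n$ nonzero coefficients of size $\lesssim 1/(b_n(n-b_n+1))$, each multiplied by a factor $\lesssim\taumix^2$, yielding a deterministic bound $\lesssim \taumix^2/(n-b_n+1)$, which is absorbed. (iii) The single-index sums $\sum_{\ell=2}^{n-1}(w(\ell,\ell)+w(\ell-1,\ell-1)-w(\ell,\ell-1))\{\MKQ g(Z_{\ell-1})\}^2$ and $\sum_{\ell=2}^{n}\Delta^{(m)}(\ell,\ell)\MKQ g(Z_{\ell-1})\MKQ g(Z_{\ell-2})$: the first has $O(n)$ terms each $\lesssim 1/(n-b_n+1)\cdot\taumix^2$ — wait, that is too big, so one must use the telescoping/cancellation structure. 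Here I would note that $w(\ell,\ell)$ is piecewise linear in $\ell$ with slope $\lesssim 1/(b_n(n-b_n+1))$, hence $|w(\ell,\ell)+w(\ell-1,\ell-1)-w(\ell,\ell-1)| = |\{w(\ell,\ell)-w(\ell,\ell-1)\} - \{w(\ell-1,\ell-1)-w(\ell-1,\ell-2)\} + \{w(\ell-1,\ell-2)-w(\ell,\ell-1)\}|$ is itself $\lesssim 1/(b_n(n-b_n+1))$ away from the boundary regions, so $O(n)$ terms of size $\lesssim\taumix^2/(b_n(n-b_n+1))$ give $\lesssim\taumix^2/b_n$, the third term in \eqref{eq:bar_r_n_reminader_bound}. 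Similarly the $\Delta^{(m)}$ sum contributes $\lesssim\taumix^2/b_n$. (iv) The remaining finitely many ``corner'' terms — $w(1,1)\{\MKQ g(Z_0)\}^2$, $(w(n,n)+w(n-1,n-1))\{\MKQ g(Z_{n-1})\}^2$, $w(n,n-2)\MKQ g(Z_{n-1})\MKQ g(Z_{n-2})$, $w(n,n)\{\MKQ g(Z_n)\}^2$, and $\sum_{j=n-2}^{n}w(n,j)\MKQ g(Z_{n-1})\MKQ g(Z_j)$ — are each a bounded number of summands with coefficient $\lesssim 1/(n-b_n+1)$ and random factor $\lesssim\taumix^2$, hence are each $\lesssim\taumix^2/(n-b_n+1) \lesssim \taumix^2\sqrt{b_n}/(n-b_n+1)$, absorbed into the first term of \eqref{eq:bar_r_n_reminader_bound}. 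Collecting (i)--(iv) and using the triangle inequality in $L^p$ yields \eqref{eq:bar_r_n_reminader_bound}.

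The main obstacle I anticipate is item (iii): the naive termwise bound on the single-index sums $\sum_{\ell}(w(\ell,\ell)+w(\ell-1,\ell-1)-w(\ell,\ell-1))\{\MKQ g(Z_{\ell-1})\}^2$ and $\sum_{\ell}\Delta^{(m)}(\ell,\ell)\MKQ g(Z_{\ell-1})\MKQ g(Z_{\ell-2})$ only gives $O(n \cdot \taumix^2/(n-b_n+1)) = O(\taumix^2)$, which is far too weak. The crux is recognising that the combinations of $w$ appearing are \emph{second-order differences} of the piecewise-linear diagonal weight profile \eqref{eq:w_ell_ell_def} (and of the near-diagonal profile), so they vanish except at the $O(1)$ breakpoints of the piecewise-linear function $\ell\mapsto w(\ell,\ell)$ and are otherwise of order $1/(b_n(n-b_n+1))$; summing $O(n)$ such terms produces the desired $\taumix^2/b_n$ rather than $\taumix^2$. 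This requires a careful case analysis of $w(\ell,\ell)$, $w(\ell,\ell-1)$, $w(\ell,\ell-2)$ across the three regimes $1\le\ell\le b_n-1$, $b_n\le\ell\le n-b_n+1$, $n-b_n+2\le\ell\le n$, isolating the $O(1)$ regime-boundary indices where the difference is $O(1/(n-b_n+1))$ and treating them as part of the corner terms in (iv). Once the coefficient bounds are pinned down, the probabilistic content is entirely routine: Burkholder's inequality for the one martingale sum in (i), and deterministic $\supnorm{\cdot}$ bounds everywhere else.
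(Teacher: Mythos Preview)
Your proposal is correct and follows essentially the same decomposition and termwise estimates as the paper's proof: Burkholder for the boundary martingale sum, and Minkowski plus the coefficient bounds from \Cref{lem:coef_properties_start} for everything else. Your anticipated obstacle in (iii) is milder than you fear: the combination $w(\ell,\ell)+w(\ell-1,\ell-1)-w(\ell,\ell-1)$ is in fact bounded by $2/(b_n(n-b_n+1))$ \emph{uniformly} in $\ell$ (cf.\ \eqref{eq:differences_check_w_ell}), and likewise $|\Delta^{(m)}(\ell,\ell)| \le 2/(b_n(n-b_n+1))$ for all $\ell$, so no separate treatment of regime-boundary indices is needed.
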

\begin{proof}
Our proof is based on the error representation \eqref{eq:bar_r_n_representation} for the remainder term $\RemU_{n,rem}$. We first notice that 
\begin{equation}
\label{eq:aux_bounds_remainder}
|M_{j}| \lesssim \taumix\eqsp, \quad |\MKQ g(Z_{j})| \lesssim \taumix \eqsp.
\end{equation}
Now we consider the error decomposition \eqref{eq:rem_u_n_def}: 
\begin{equation}
\label{eq:rem_u_n_def_appendix}
\begin{split}
\RemU_{n,rem} 
&= \RemU_{n,rem,1} + \RemU_{n,rem,2} + \RemU_{n,rem,3} + \RemU_{n,rem,4} + \RemU_{n,rem,5}\eqsp,
\end{split}
\end{equation}
where we have defined 
\begin{equation}
\label{eq:rem_u_n_reminders}
\begin{split}
\RemU_{n,rem,1} &= - \MKQ g(Z_n) \sum_{j=1}^{n}w(n,j) M_{j}\eqsp, \\
\RemU_{n,rem,2} &= \sum_{\ell=2}^{n-1}\bigl( w(\ell,\ell) + w(\ell-1,\ell-1) - w(\ell,\ell-1) \bigr) \{\MKQ g(Z_{\ell-1})\}^2\eqsp, \\
\RemU_{n,rem,3} &= w(1,1) \{\MKQ g(Z_{0})\}^2 + (w(n,n)+w(n-1,n-1)) \{\MKQ g(Z_{n-1})\}^2 \\
&+ w(n,n-2) \MKQ g(Z_{n-1}) \MKQ g(Z_{n-2}) + w(n,n) \{\MKQ g(Z_{n})\}^2 - \sum_{j=n-2}^{n} w(n,j) \MKQ g(Z_{n-1}) \MKQ g(Z_{j})\eqsp, \\
\RemU_{n,rem,4} &= \sum_{\ell=2}^{n}\Delta^{(m)}(\ell,\ell) \MKQ g(Z_{\ell-1}) \MKQ g(Z_{\ell-2})\eqsp, \quad \RemU_{n,rem,5} = -\MKQ g(Z_{n}) \sum_{j=1}^{n} \Delta^{(0,1)}(n,j) \MKQ g(Z_{j-1})\eqsp.
\end{split}
\end{equation}
Using Burkholder's inequality and \eqref{eq:aux_bounds_remainder}, that for any $p \geq 2$, 
\begin{align}
\PE_{\xi}^{1/p}\bigl[\bigl| \RemU_{n,rem,1} \bigr|^{1/p}\bigr] 
&\lesssim \taumix \PE_{\xi}^{1/p}[\bigl| \sum_{j=1}^{n}w(n,j) M_{j} \bigr|^{p}] \\
&\lesssim \taumix p \PE_{\xi}^{1/p}[\bigl( \sum_{j=1}^{n}w^2(n,j) M_{j}^2 \bigr)^{p/2}] \\
&\lesssim \frac{p \taumix^2 \sqrt{b_n}} {n - b_n+1}\eqsp.
\end{align}
Applying Minkowski's inequality and the bound \eqref{eq:differences_check_w_ell}, we get 
\begin{align}
\PE_{\xi}^{1/p}\bigl[\bigl| \RemU_{n,rem,2} \bigr|^{1/p}\bigr] \lesssim \frac{\taumix^2}{b_n}\eqsp.
\end{align}
Similarly, using Minkowski's inequality, we show that
\begin{equation}
\PE_{\xi}^{1/p}\bigl[|\RemU_{n,rem,3}|^{p} \bigr] \lesssim \frac{\taumix^2}{b_n (n-b_n+1)}\eqsp.   
\end{equation}
Using Minkowski's inequality and the bound $|\Delta^{(m)}(\ell,\ell)| \leq \frac{2}{b_n (n-b_n+1)}$, we get that 
\begin{equation}
\PE_{\xi}^{1/p}\bigl[|\RemU_{n,rem,4}|^{p} \bigr] \lesssim \frac{\taumix^2}{b_n}\eqsp.   
\end{equation}
Using Minkowski's inequality, bound \eqref{eq:difference_bound_1_0_0_1}, and the fact that there are at most $b_n$ non-zero terms among $\Delta^{(0,1)}(n,j)$, $1 \leq j \leq n$, we get 
\begin{equation}
\PE_{\xi}^{1/p}\bigl[|\RemU_{n,rem,5}|^{p} \bigr] \lesssim \frac{\taumix^2}{n-b_n+1}\eqsp.
\end{equation}
It remains to combine the above bounds.
\end{proof}

\begin{lemma}
\label{lem:bar_r_n_martigale_bound}
Under assumptions of \Cref{OBM concentration lemma}, it holds for any $p \geq 2$ that 
\begin{equation}
\label{eq:bar_r_n_martingale_bound}
\PE_{\xi}^{1/p}|\RemU_{n,mart}|^{p} \lesssim \frac{\taumix^2}{b_n}\eqsp.
\end{equation}
\end{lemma}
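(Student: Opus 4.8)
The plan is to prove the stronger, $p$-free statement that $\|\RemU_{n,mart}\|_\infty \lesssim \taumix^2/b_n$, from which the lemma is immediate since $\PE_{\xi}^{1/p}[|\RemU_{n,mart}|^p] \le \|\RemU_{n,mart}\|_\infty$. The basic inputs are the bounds $|M_\ell| \lesssim \taumix$ and $|\MKQ g(Z_\ell)| \lesssim \taumix$ from \eqref{eq:aux_bounds_remainder} (a consequence of $\|g\|_\infty \lesssim \taumix$), so that every product $M_\ell \MKQ g(Z_m)$ occurring in $S_1, S_2$ of \eqref{eq:rem_u_mart_def} has absolute value $\lesssim \taumix^2$. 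A naive term-by-term estimate is too weak — e.g. $\sum_{\ell} w(\ell,\ell) = 1$, so bounding $\sum_\ell w(\ell,\ell) M_\ell \MKQ g(Z_\ell)$ termwise only gives $\taumix^2$ — so the point is to exhibit the cancellation that produces an extra factor $1/b_n$.

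For $S_1$, I would reindex the first sum in \eqref{eq:rem_u_mart_def} by $\ell \mapsto \ell+1$ so that it lines up with the second sum; since $\Delta^{(1,0)}(\ell+1,\ell) = w(\ell+1,\ell) - w(\ell,\ell)$ this gives
$$
S_1 = \sum_{\ell=1}^{n-1} \bigl( w(\ell+1,\ell) - 2 w(\ell,\ell) \bigr) M_\ell \MKQ g(Z_\ell) - w(n,n) M_n \MKQ g(Z_n)\eqsp.
$$
A direct inspection of the piecewise formulas \eqref{eq:w_ell_ell_def} and \eqref{eq:w_ell_j_def} across the three regimes $\ell \le b_n-1$, $b_n \le \ell \le n-b_n+1$, $\ell > n-b_n+1$ (and the seams between them) shows $|w(\ell+1,\ell) - 2w(\ell,\ell)| \le 2/(b_n(n-b_n+1))$ for every $\ell$ — in fact this quantity vanishes on the first regime and equals $-2/(b_n(n-b_n+1))$ afterwards — and $w(n,n) = 1/(b_n(n-b_n+1))$. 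Hence $\|S_1\|_\infty \le \taumix^2\bigl((n-1)\cdot 2/(b_n(n-b_n+1)) + 1/(b_n(n-b_n+1))\bigr) \lesssim \taumix^2/b_n$, using $n \le 2(n-b_n+1)$, which follows from the hypothesis $n \ge 2b_n+1$.

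For $S_2$ no reindexing is needed: the coefficient is $\Delta^{(0,1)}(\ell,\ell) + w(\ell,\ell) = 2w(\ell,\ell) - w(\ell,\ell-1)$, and the same regime-by-regime check of \eqref{eq:w_ell_ell_def}--\eqref{eq:w_ell_j_def} gives $|2w(\ell,\ell) - w(\ell,\ell-1)| \le 2/(b_n(n-b_n+1))$ for all $\ell$ (it too is either $0$ or $2/(b_n(n-b_n+1))$). Therefore $\|S_2\|_\infty \le \taumix^2 \cdot n \cdot 2/(b_n(n-b_n+1)) \lesssim \taumix^2/b_n$, and $\|\RemU_{n,mart}\|_\infty \le \|S_1\|_\infty + \|S_2\|_\infty \lesssim \taumix^2/b_n$ by the triangle inequality. (One may note that the summand $M_\ell\MKQ g(Z_{\ell-1})$ in $S_2$ is actually a martingale-difference sequence, so Burkholder's inequality would also apply, but the crude uniform bound is already enough and avoids any $p$-dependence.)

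The only real work is the bookkeeping behind the two coefficient estimates: one has to carry the three-part definitions of $w(\ell,\ell)$ and $w(\ell,\ell\pm1)$ through every regime and seam and check that the specific combinations $w(\ell+1,\ell) - 2w(\ell,\ell)$ and $2w(\ell,\ell) - w(\ell,\ell-1)$ collapse from the generic size $O(1/(n-b_n+1))$ down to $O(1/(b_n(n-b_n+1)))$. Once that is in hand, everything is a crude uniform bound, which is exactly why the final estimate has no dependence on $p$ (and, a fortiori, is dominated by the $p^2\taumix^2/\sqrt{b_n}$ term of \Cref{OBM concentration lemma}).
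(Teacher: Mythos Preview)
Your proposal is correct and follows essentially the same route as the paper: split $\RemU_{n,mart}=S_1+S_2$, reindex the first sum in $S_1$ so the two sums share a common summand, observe that the resulting coefficient combinations $w(\ell+1,\ell)-2w(\ell,\ell)$ and $2w(\ell,\ell)-w(\ell,\ell-1)$ are $O\bigl(1/(b_n(n-b_n+1))\bigr)$ by direct regime-by-regime inspection, and then bound termwise using $|M_\ell|,|\MKQ g(Z_\ell)|\lesssim\taumix$. The paper phrases the final step as Minkowski's inequality rather than a pointwise $\|\cdot\|_\infty$ bound, but with uniformly bounded summands the two are equivalent, and your observation that this immediately explains the absence of any $p$-factor is exactly right.
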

\begin{proof}
We recall that 
\[
\RemU_{n,mart} = S_1 + S_2\eqsp,
\]
where $S_1$ and $S_2$ are defined in \eqref{eq:rem_u_mart_def}, and analyze these terms separately. We first note that the term $S_1$ can be written as 
\begin{align}
S_1 &= \sum_{\ell=2}^{n} \Delta^{(1,0)}(\ell,\ell-1) \MKQ g(Z_{\ell-1}) M_{\ell-1} - \sum_{\ell=1}^{n} w(\ell,\ell) M_{\ell} \MKQ g(Z_{\ell}) \\
&= \sum_{\ell=2}^{n}\{w(\ell,\ell-1) - w(\ell-1,\ell-1) - w(\ell,\ell)\} \MKQ g(Z_{\ell}) M_{\ell} -w(1,1) \MKQ g(Z_{1}) M_{1}\eqsp.
\end{align}
Note that $|\MKQ g(Z_{\ell})| \lesssim \taumix$, $|M_{\ell}| \lesssim \taumix$, and 
\[
w(\ell, \ell - 1) =
\begin{cases}
\frac{2(\ell - 1)}{b_n(n - b_n + 1)}, & \text{if } \ell < b_n, \\
\frac{2(b_n - 1)}{b_n(n - b_n + 1)}, & \text{if } b_n \le \ell \le n - b_n + 1, \\
\displaystyle \frac{2(n - \ell + 1)}{b_n(n - b_n + 1)}, & \text{if } \ell > n - b_n + 1\eqsp,
\end{cases}
\]
hence, with algebraic manipulations, we can check that 
\begin{equation}
\label{eq:differences_check_w_ell}
|w(\ell,\ell-1) - w(\ell-1,\ell-1) - w(\ell,\ell)| \leq \frac{2}{b_n (n-b_n+1)}\eqsp.
\end{equation}
Hence, with Minkowski's inequality, 
\begin{align}
\PE_{\xi}^{1/p}\bigl[ \bigl| S_1 \bigr|^{p}\bigr] \lesssim \frac{\taumix^2}{b_n}\eqsp.  
\end{align}
Similarly, for the term $S_2$, we write that 
\begin{align}
S_2 = \sum_{\ell=1}^{n} \bigl(\Delta^{(0,1)}(\ell,\ell) + w(\ell,\ell)\bigr) M_{\ell} \MKQ g(Z_{\ell-1}) = \sum_{\ell=1}^{n} \bigl( 2 w(\ell,\ell) - w(\ell,\ell-1)\bigr) M_{\ell} \MKQ g(Z_{\ell-1})\eqsp,
\end{align}
where in the last line we have additionally used the definition of coefficients $\Delta^{(0,1)}(\ell,\ell)$ from \eqref{eq:coef_def}. Hence, applying Minkowski's inequality and an upper bound $\bigl| 2 w(\ell,\ell) - w(\ell,\ell-1) \bigr| \lesssim \frac{1}{b_n (n-b_n+1)}$, we get that 
\begin{align}
\PE_{\xi}^{1/p}\bigl[ \bigl| S_2 \bigr|^{p}\bigr] \lesssim \frac{\taumix^2}{b_n}\eqsp.
\end{align}
Combining the above bounds concludes the proof.
\end{proof}

\begin{lemma}
\label{lem:bar_r_n_bound}
Under assumptions of \Cref{OBM concentration lemma}, it holds for any $p \geq 2$ that
\begin{equation}
\label{eq:bar_r_n_reminader_bound}
\PE_{\xi}^{1/p}|\RemU_{n}|^{p} \lesssim \frac{p \taumix^{2}}{\sqrt{b_n}} + \frac{p \taumix^2 \sqrt{b_n}} {n - b_n+1} + \frac{\taumix^2}{b_n}\eqsp.
\end{equation}
\end{lemma}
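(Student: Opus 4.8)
The plan is to start from the decomposition \eqref{eq:bar_r_n_representation} of \Cref{lem:bar_r_n_representation}, which expresses $\RemU_n$ as the sum of the three ``main'' double sums
\begin{align*}
\mathrm{I} &= \sum_{\ell=3}^{n} \MKQ g(Z_{\ell-1}) \sum_{j=1}^{\ell-2} \Delta^{(1,0)}(\ell,j) M_{j}, \qquad
\mathrm{II} = \sum_{\ell=1}^{n} M_{\ell} \sum_{j=1}^{\ell-1} \Delta^{(0,1)}(\ell,j) \MKQ g(Z_{j-1}), \\
\mathrm{III} &= \sum_{\ell=3}^{n-1} \sum_{j=1}^{\ell-2} \Delta^{(1,1)}(\ell,j) \MKQ g(Z_{\ell-1}) \MKQ g(Z_{j-1}),
\end{align*}
together with the remainder pieces $\RemU_{n,mart}$ and $\RemU_{n,rem}$. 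The latter two are already bounded in \Cref{lem:bar_r_n_martigale_bound} and \Cref{lem:bar_r_n_remainder_bound} and together contribute $p\taumix^{2}\sqrt{b_n}/(n-b_n+1)+\taumix^{2}/b_n$ to the right-hand side, so by Minkowski's inequality it remains to estimate $\mathrm{I}$, $\mathrm{II}$, and $\mathrm{III}$ separately. Throughout I would use the uniform bounds $\norm{g}[\infty]\lesssim\taumix$, hence $|M_\ell|\lesssim\taumix$ and $|\MKQ g(Z_\ell)|\lesssim\taumix$ almost surely, together with the coefficient estimates of \Cref{lem:coef_properties_start}: for each fixed $\ell$ at most $O(b_n)$ of the numbers $\Delta^{(1,0)}(\ell,\cdot)$ and $\Delta^{(0,1)}(\ell,\cdot)$ are nonzero, each of modulus at most $2/(b_n(n-b_n+1))$, while $\Delta^{(1,1)}(\ell,j)$ vanishes unless $j=\ell-b_n$, in which case its modulus equals $2/(b_n(n-b_n+1))$.

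The term $\mathrm{I}$ is the delicate one: its summand is $\F_{\ell-1}$-measurable but is \emph{not} a martingale difference in $\ell$, so Burkholder's inequality cannot be applied to the outer sum. Instead I would apply Minkowski's inequality over $\ell$, pull out the factor $\MKQ g(Z_{\ell-1})$ using $\norm{\MKQ g}[\infty]\lesssim\taumix$, and for each fixed $\ell$ apply Burkholder's inequality \cite[Theorem~8.6]{osekowski:2012} to the \emph{deterministically} weighted martingale sum $\sum_{j=1}^{\ell-2}\Delta^{(1,0)}(\ell,j)M_j$, which yields $\PE_{\xi}^{1/p}\bigl[\bigl|\sum_{j=1}^{\ell-2}\Delta^{(1,0)}(\ell,j)M_j\bigr|^{p}\bigr]\lesssim p\taumix\bigl(\sum_{j=1}^{\ell-2}(\Delta^{(1,0)}(\ell,j))^{2}\bigr)^{1/2}\lesssim p\taumix/(\sqrt{b_n}(n-b_n+1))$ by the sparsity and size of the coefficients. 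Each of the $\lesssim n$ terms of the outer Minkowski bound is thus $\lesssim p\taumix^{2}/(\sqrt{b_n}(n-b_n+1))$, and using $n\le 2(n-b_n+1)$ --- valid since $n\ge 2b_n+1$ --- gives $\PE_{\xi}^{1/p}[|\mathrm{I}|^{p}]\lesssim p\taumix^{2}/\sqrt{b_n}$.

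For $\mathrm{II}$ the situation is better: the inner sum $\sum_{j=1}^{\ell-1}\Delta^{(0,1)}(\ell,j)\MKQ g(Z_{j-1})$ is $\F_{\ell-2}$-measurable, so $M_\ell$ times this inner sum \emph{is} a martingale difference in $\ell$ and Burkholder's inequality applies directly to the outer sum; bounding the inner sum pointwise by $\lesssim b_n\cdot(b_n(n-b_n+1))^{-1}\taumix=\taumix/(n-b_n+1)$ and using $|M_\ell|\lesssim\taumix$ gives $\PE_{\xi}^{1/p}[|\mathrm{II}|^{p}]\lesssim p\bigl(\sum_{\ell=1}^{n}\taumix^{2}\taumix^{2}(n-b_n+1)^{-2}\bigr)^{1/2}=p\taumix^{2}\sqrt{n}/(n-b_n+1)\lesssim p\taumix^{2}/\sqrt{b_n}$. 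For $\mathrm{III}$, \Cref{lem:coef_properties_start} collapses the double sum to the single off-diagonal $\sum_{\ell=b_n+1}^{n-1}\Delta^{(1,1)}(\ell,\ell-b_n)\MKQ g(Z_{\ell-1})\MKQ g(Z_{\ell-b_n-1})$, and a plain triangle inequality with $|\MKQ g|\lesssim\taumix$ yields $\PE_{\xi}^{1/p}[|\mathrm{III}|^{p}]\lesssim n\cdot\taumix^{2}/(b_n(n-b_n+1))\lesssim\taumix^{2}/b_n$.

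Adding the bounds for $\mathrm{I}$, $\mathrm{II}$, $\mathrm{III}$ to those of \Cref{lem:bar_r_n_martigale_bound} and \Cref{lem:bar_r_n_remainder_bound}, and absorbing $p\taumix^{2}\sqrt{n}/(n-b_n+1)\lesssim p\taumix^{2}/\sqrt{b_n}$, produces exactly the asserted estimate. The main obstacle is organizational rather than analytic: one has to recognize that $\mathrm{II}$ is a genuine martingale transform whereas $\mathrm{I}$ is not --- which forces the ``Minkowski outside, Burkholder inside'' treatment of $\mathrm{I}$ --- and one must keep track of the support and magnitude of $\Delta^{(1,0)},\Delta^{(0,1)},\Delta^{(1,1)}$ sharply enough that the gains $1/\sqrt{b_n}$ and $\sqrt{b_n}/(n-b_n+1)$ come out with the correct powers of $b_n$ rather than a looser bound.
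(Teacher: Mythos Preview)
Your proposal is correct and follows essentially the same route as the paper's proof: the same decomposition \eqref{eq:bar_r_n_representation}, the same ``Minkowski outside, Burkholder inside'' treatment of $\mathrm{I}$, Burkholder applied directly to the martingale transform $\mathrm{II}$, the collapse of $\mathrm{III}$ to a single off-diagonal via \Cref{lem:coef_properties_start}, and the invocation of \Cref{lem:bar_r_n_martigale_bound} and \Cref{lem:bar_r_n_remainder_bound} for the remaining pieces. The only cosmetic difference is that the paper records the bound on $\mathrm{II}$ as $p\taumix^{2}/\sqrt{n-b_n+1}$ rather than your $p\taumix^{2}\sqrt{n}/(n-b_n+1)$, but under $n\ge 2b_n+1$ these are equivalent and both are dominated by $p\taumix^{2}/\sqrt{b_n}$.
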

\begin{proof}
We estimate separately each of the terms from the error decomposition \eqref{eq:bar_r_n_representation}. First, note that for $\ell \geq 3$ and $1 \leq j \leq \ell - 2$, 
$$
|\Delta^{(1,0)}(\ell,j)| \leq \frac{2}{b_n(n-b_n+1)}.
$$ 
Hence, 
\begin{align}
\PE_{\xi}^{1/p}\bigl[\bigl| \MKQ g(Z_{\ell-1}) \sum_{j=1}^{\ell-2} \Delta^{(1,0)}(\ell,j) M_{j}\bigr|^{p}\bigr] 
&\lesssim \taumix \PE_{\xi}^{1/p}\bigl[\bigl| \sum_{j=1}^{\ell-2} \Delta^{(1,0)}(\ell,j) M_{j}\bigr|^{p}\bigr] \\
&\lesssim p \taumix \PE_{\xi}^{1/p}\bigl[\bigl(\sum_{j=1}^{\ell-2} \{\Delta^{(1,0)}(\ell,j)\}^2 M_{j}^2\bigr)^{p/2}\bigr] \\
&\lesssim \frac{p \taumix^{2}}{(n-b_n+1)\sqrt{b_n}}\eqsp. 
\end{align}
Thus, applying Minkowski's inequality, we get
\begin{align}
\PE_{\xi}^{1/p}\bigl[\bigl|\sum_{\ell=3}^{n} \MKQ g(Z_{\ell-1}) \sum_{j=1}^{\ell-2} \Delta^{(1,0)}(\ell,j) M_{j}\bigr|^{p}\bigr] \lesssim \frac{p \taumix^{2}}{\sqrt{b_n}}\eqsp.
\end{align}
Similarly, applying Burkholder's inequality, we obtain that 
\begin{align}
\PE_{\xi}^{1/p}\bigl[\bigl|\sum_{\ell=1}^{n} M_{\ell} \sum_{j=1}^{\ell} \Delta^{(0,1)}(\ell,j) \MKQ g(Z_{j-1})\bigr|\bigr] 
&\leq p \PE_{\xi}^{1/p}\biggl[\biggl( \sum_{\ell=1}^{n} \bigl(\sum_{j=1}^{\ell} \Delta^{(0,1)}(\ell,j) \MKQ g(Z_{j-1})\bigr)^2 M_{\ell}^2 \biggr)^{p/2}\biggr] \\
&\lesssim \frac{p \taumix^2}{\sqrt{n-b_n+1}}\eqsp.
\end{align}
Finally, applying Minkowski's inequality, we get
\begin{align}
\PE_{\pi}^{1/p}\bigl[\bigl|\sum_{\ell=3}^{n-1} \sum_{j=1}^{\ell-2} \Delta^{(1,1)}(\ell,j) \MKQ g(Z_{\ell-1}) \MKQ g(Z_{j-1})\bigr|^p\bigr] \lesssim \frac{\taumix^2}{b_n} \eqsp.
\end{align}
Applying now \Cref{lem:bar_r_n_remainder_bound}, we get that 
\begin{align}
\PE_{\pi}^{1/p}|\RemU_{n,rem}|^{p} \lesssim \frac{p \taumix^2 \sqrt{b_n}} {n - b_n+1} + \frac{\taumix^2}{b_n}\eqsp.
\end{align}
Similarly, applying \Cref{lem:bar_r_n_martigale_bound}, we get 
\begin{align}
\PE_{\pi}^{1/p}|\RemU_{n,mart}|^{p} \lesssim \frac{\taumix^2}{b_n}\eqsp.
\end{align}
Now it remains to combine above bounds in the decomposition \eqref{eq:bar_r_n_representation} and apply Minkowski's inequality once again.
\end{proof}

\section{Probability inequalities }
\label{appendix:Markov_technical}
We begin this section with a version of Rosenthal inequality \cite{Rosenthal1970}. Let $f: \Zset \to \rset$ be a bounded function with $\|f\|_{\infty} < \infty$. In this paper we need only a simplified version of the Rosenthal inequality, where the leading term with respect to number of terms $n$ in $\sum_{k=1}^{n}\{f(Z_k) - \pi(f)\}$ is governed not by the corresponding asymptotic variance $\sigma^2_{\infty}(f)$, but by the appropriate variance proxy. Below we prove the version of Rosenthal's inequality:
\begin{lemma}
\label{lem:auxiliary_rosenthal_weighted} 
Assume \Cref{assum:UGE}. Then for any $p \geq 2$ and $f: \Zset \rightarrow \rset$ with $\|f\|_{\infty} < \infty$, any initial distribution $\xi$ on $(\Zset,\Zsigma)$, and any coefficients $\beta_k \in \rset$, it holds that
\begin{equation}
\label{eq:rosenthal_weighted}
\begin{split}
\PE_{\xi}^{1/p}\bigl[\bigr| \sum_{k=1}^{n} \beta_k \{f(Z_k) - \pi(f)\} \bigr|^p\bigr] 
&\leq (16/3)\taumix p^{1/2}\|f\|_{\infty}(\sum_{k=2}^{n}\beta_k^2)^{1/2}\\
&\qquad +(8/3) \taumix \bigl( |\beta_1| + |\beta_{n}| + \sum_{k=1}^{n-1}|\beta_{k+1}-\beta_{k}|\bigr) \|f\|_{\infty}\eqsp.
\end{split}
\end{equation}
\end{lemma}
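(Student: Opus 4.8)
The plan is to reduce the weighted sum $\sum_{k=1}^{n}\beta_k\{f(Z_k)-\pi(f)\}$ to a martingale via the Poisson equation, and then apply Burkholder's inequality together with summation by parts to handle the coefficients $\beta_k$. First I would write $f(Z_k)-\pi(f) = g(Z_k) - \MKQ g(Z_k)$ using the solution $g$ of the Poisson equation \eqref{eq:Pois_eq_with_f}, and then insert and subtract $\MKQ g(Z_{k-1})$ to obtain $f(Z_k)-\pi(f) = M_k + \bigl(\MKQ g(Z_{k-1}) - \MKQ g(Z_k)\bigr)$, where $M_k = g(Z_k) - \MKQ g(Z_{k-1})$ is the martingale-increment sequence from \eqref{eq:martingale_increment_def}. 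Substituting this into the weighted sum gives
\[
\sum_{k=1}^{n}\beta_k\{f(Z_k)-\pi(f)\} = \sum_{k=1}^{n}\beta_k M_k + \sum_{k=1}^{n}\beta_k\bigl(\MKQ g(Z_{k-1}) - \MKQ g(Z_k)\bigr)\eqsp.
\]

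For the martingale part $\sum_{k=1}^{n}\beta_k M_k$, I would apply Burkholder's inequality \cite[Theorem~8.6]{osekowski:2012} (as in the proof of \Cref{OBM concentration lemma}), which yields $\PE_\xi^{1/p}\bigl[\bigl|\sum_{k=1}^n \beta_k M_k\bigr|^p\bigr] \lesssim p^{1/2}\bigl(\sum_{k=1}^n \beta_k^2 \PE_\xi^{2/p}[|M_k|^p]\bigr)^{1/2}$ after Minkowski inside the bracket; using $\|M_k\|_\infty \lesssim \taumix \|f\|_\infty$ (which follows from $\|g\|_\infty \lesssim \taumix\|f\|_\infty$ under \Cref{assum:UGE}, via the geometric series formula for $g$ and \eqref{eq:tau_mix_contraction}) gives the leading term of order $\taumix p^{1/2}\|f\|_\infty (\sum_{k}\beta_k^2)^{1/2}$; tracking constants gives the factor $16/3$. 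One must be slightly careful that $M_1$ depends on $Z_0$ drawn from $\xi$, but since the bound on $\|M_k\|_\infty$ is uniform this causes no difficulty (and explains why the stated bound has $\sum_{k=2}^n\beta_k^2$ with the $k=1$ term absorbed into the second group).

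For the telescoping part, I would perform an Abel summation by parts: $\sum_{k=1}^n \beta_k\bigl(\MKQ g(Z_{k-1}) - \MKQ g(Z_k)\bigr) = \beta_1 \MKQ g(Z_0) - \beta_n \MKQ g(Z_n) + \sum_{k=1}^{n-1}(\beta_{k+1}-\beta_k)\MKQ g(Z_k)$, so that by the triangle (Minkowski) inequality and $\|\MKQ g\|_\infty \le \|g\|_\infty \lesssim \taumix\|f\|_\infty$ this term is bounded deterministically by $\taumix\|f\|_\infty\bigl(|\beta_1| + |\beta_n| + \sum_{k=1}^{n-1}|\beta_{k+1}-\beta_k|\bigr)$ up to the constant $8/3$. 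Combining the two parts via Minkowski's inequality gives \eqref{eq:rosenthal_weighted}. The only mildly delicate point — and the place where the precise constants $16/3$ and $8/3$ come from — is bookkeeping the Burkholder constant and the $\|g\|_\infty$ bound; there is no genuine obstacle, since the martingale structure is exactly the one already exploited in \Cref{prop:quadr_form_decomposition}, and everything else is summation by parts.
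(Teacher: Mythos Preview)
Your decomposition via the Poisson equation, followed by Abel summation on the telescoping part, is exactly the paper's argument; the boundary bookkeeping (absorbing $\beta_1 M_1$ into the non-martingale group so that the martingale sum runs from $k=2$) is also handled the same way.

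The one substantive difference is in how the martingale term $T_1=\sum_{k\ge 2}\beta_k M_k$ is controlled. You invoke Burkholder's inequality \cite[Theorem~8.6]{osekowski:2012} and claim a factor $p^{1/2}$, but that reference---as the paper itself uses it elsewhere---delivers the constant $p$, not $p^{1/2}$; with Burkholder you would obtain the weaker bound $(16/3)\taumix\, p\,\|f\|_\infty(\sum_{k\ge 2}\beta_k^2)^{1/2}$, which does not match \eqref{eq:rosenthal_weighted}. The paper instead exploits the uniform bound $|M_k|\le (16/3)\taumix\|f\|_\infty$ to apply the Azuma--Hoeffding inequality, obtaining sub-Gaussian tails for $T_1$, and then converts these into $p$-th moments via the elementary bound $\PE[|X|^p]\le 2p^{p/2}\sigma^p$ for sub-Gaussian $X$ (\Cref{lem:bound_subgaussian}); this is what produces the sharper $p^{1/2}$ dependence. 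So to recover the stated constants, swap your Burkholder step for Azuma--Hoeffding plus the sub-Gaussian moment lemma; the rest of your argument goes through unchanged.
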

\begin{proof}
Under assumption \Cref{assum:UGE} the Poisson equation 
\[
g(z) - \MKQ g(z) = f(z) - \pi(f)
\]
has a unique solution for any bounded $f$ (see \cite[Chapter~21]{douc:moulines:priouret:soulier:2018}), which is given by the formula 
\[
g(z) = \sum_{k=0}^{\infty}\{\MKQ^{k}f(z) - \pi(f)\}\eqsp.
\]
Using \Cref{assum:UGE}, we obtain that $g(z)$ is also bounded with
\begin{align}
| g(z) | \leq \sum_{k=0}^{\infty} |\MKQ^{k}f(z) - \pi(f)| \leq 2\|f\|_{\infty} \sum_{k=0}^{\infty} (1/4)^{\lfloor k/\taumix \rfloor} \leq (8/3) \taumix \| f\|_{\infty}\eqsp.
\end{align}
Hence, we can represent
\begin{equation}
\label{eq:repr_weighted_stat}
\begin{split}
\sum_{k=1}^{n} \beta_k (f(Z_k) - \pi(f)) 
&= \underbrace{\sum_{k=2}^{n} \beta_k (g(Z_{k}) - \MKQ g(Z_{k-1}))}_{T_1} \\
&\quad \underbrace{+\sum_{k=1}^{n-1} (\beta_{k+1} - \beta_{k}) \MKQ g(Z_{k}) + \beta_{1} g(Z_1) - \beta_{n} \MKQ g(Z_{n})}_{T_2}\eqsp.
\end{split}
\end{equation}
The term $T_2$ can be controlled using Minkowski's inequality:
\begin{equation}
\label{eq:T_2_bound_rosenthal_remainder}
\PE_{\xi}^{1/p}[\bigl| T_{2}\bigr|^{p}] \leq (8/3) \taumix \bigl(|\beta_1| + |\beta_{n}| + \sum_{k=1}^{n-1}|\beta_{k+1}-\beta_{k}| \bigr) \|f\|_{\infty}\eqsp.
\end{equation}
Now we proceed with $T_1$. Set $\F_{k} = \sigma(Z_{\ell},\ell \leq k)$. Since $\CPE{g(Z_{k}) - \MKQ g(Z_{k-1})}{\F_{k-1}} = 0$ a.s., and $|g(Z_k) - \MKQ g(Z_{k-1})| \leq (16/3) \taumix\|f\|_{\infty}$, we get, using the Azuma-Hoeffding inequality \cite[Corollary 3.9]{vanHandel2016}, that 
\begin{equation}
\PP_{\xi}[|T_1| \geq t] \leq 2\exp\biggl\{-\frac{2t^2}{(16/3)^2\taumix^2\|f\|_{\infty}^2\sum_{k=2}^{n}\norm{\beta_k}^2}\biggr\}\eqsp.
\end{equation}
Hence, applying \Cref{lem:bound_subgaussian} we get
\begin{equation}
\PE^{1/p}_{\xi}[|T_1|^p] \leq (16/3) p^{1/2}\taumix \|f\|_{\infty}(\sum_{k=2}^{n}\norm{\beta_k}^2)^{1/2}\eqsp,
\end{equation}
and the statement follows.
\end{proof}
We conclude this section with a standard moment bounds for sub-Gaussian random variables. Proof of this result can be found in \cite[Lemma~7]{durmus2022finite}.
\begin{lemma}
\label{lem:bound_subgaussian}
Let $X$ be a random variable satisfying $\PP(|X| \geq t) \leq 2 \exp(-t^2/(2\sigma^2))$ for any $t \geq 0$ and some $\sigma^2 >0$. Then, for any $p \geq 2$, it holds that $ \PE[|X|^p] \leq 2 p^{p/2}\sigma^p$.
\end{lemma}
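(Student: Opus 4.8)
The plan is to square first and then integrate the tail. Set $Y=|X|^{2}$. The hypothesis yields the sub-exponential tail bound $\PP(Y\ge t)=\PP(|X|\ge\sqrt{t})\le 2\exp\!\bigl(-t/(2\sigma^{2})\bigr)$ for all $t\ge 0$. Write $q=p/2$, so that $q\ge 1$ because $p\ge 2$, and note $\PE[|X|^{p}]=\PE[Y^{q}]$. Since $Y\ge 0$, the layer-cake formula gives
\begin{equation*}
\PE[Y^{q}]=\int_{0}^{\infty} q\,t^{q-1}\,\PP(Y\ge t)\,\rmd t \;\le\; 2q\int_{0}^{\infty} t^{q-1}\rme^{-t/(2\sigma^{2})}\,\rmd t.
\end{equation*}

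Next I would compute the integral by the change of variables $s=t/(2\sigma^{2})$, which identifies it with a Gamma integral:
\begin{equation*}
\int_{0}^{\infty} t^{q-1}\rme^{-t/(2\sigma^{2})}\,\rmd t = (2\sigma^{2})^{q}\,\Gamma(q).
\end{equation*}
Combining these two displays and using $q\,\Gamma(q)=\Gamma(q+1)$ yields $\PE[|X|^{p}]\le 2(2\sigma^{2})^{q}\,\Gamma(q+1)$.

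The only slightly non-elementary ingredient is the inequality $\Gamma(q+1)\le q^{q}$, valid for every $q\ge 1$; I would either cite it or supply the short argument (it follows from log-convexity of $\Gamma$ together with $\Gamma(1)=\Gamma(2)=1$, or from Stirling's bound). Granting it,
\begin{equation*}
\PE[|X|^{p}]\le 2(2\sigma^{2})^{q} q^{q} = 2(2 q\,\sigma^{2})^{q} = 2(p\,\sigma^{2})^{p/2} = 2\,p^{p/2}\sigma^{p},
\end{equation*}
which is exactly the claim. There is no real obstacle: the argument is a routine tail integration, and the only point requiring mild care is that the hypothesis $p\ge 2$ is precisely what makes $q=p/2\ge 1$, which is exactly the range in which the Gamma bound $\Gamma(q+1)\le q^{q}$ holds; for $p<2$ the bound with this constant would already fail.
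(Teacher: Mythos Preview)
Your argument is correct and complete: the layer-cake integration together with the Gamma bound $\Gamma(q+1)\le q^{q}$ for $q\ge 1$ gives exactly the stated constant. The paper does not actually prove this lemma; it merely cites an external reference (Lemma~7 in Durmus et al., 2022), so there is no in-paper proof to compare against, but your computation is the standard one and is presumably what that reference contains.
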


\newpage 
\bibliographystyle{chicago}
\bibliography{references}

\end{document}